\newcommand{\abs}[1]{\lvert#1\rvert}
\newtheorem{thm}{Theorem}
\newtheorem{lem}[thm]{Lemma}
\theoremstyle{remark}
\newtheorem*{rem}{Remark}
\newtheorem{obs}{Observation}[thm]
\newtheorem{clm}{Claim}[thm]
\newtheorem{case}{Case}
\declaretheorem[name=Proof of Claim,style=remark,qed=$\lozenge$,numbered=no]{poc}
\newcommand{\thrd}{\cdots}
\newcommand{\rlp}{\operatorname{rl_{p}}}
\newcommand{\rls}{\operatorname{rl_{s}}}
\newcommand{\gm}[1][m]{G^{1/#1}}
\newcommand*{\bfrac}[2]{\genfrac{(}{)}{}{}{#1}{#2}}
\newcommand*{\theorem}[1]{Theorem~\ref{#1}}
\newcommand*{\lemma}[1]{Lemma~\ref{#1}}
\newcommand*{\observ}[1]{Observation~\ref{#1}}
\newcommand{\ie}{i.e.\ }
\newcommand*{\floor}[1]{\lfloor#1\rfloor}
\newcommand*{\floorf}[2]{\genfrac{\lfloor}{\rfloor}{}{}{#1}{#2}}
\newcommand*{\ceil}[1]{\lceil#1\rceil}
\newcommand*{\ceilf}[2]{\genfrac{\lceil}{\rceil}{}{}{#1}{#2}}
\newcommand*{\seq}[3][1]{#2_{#1},\ldots,#2_{#3}}
\newcommand*{\level}[2]{[#1]^{(#2)}}
\let\@fnsymbol\@arabic
\newcommand\thankx[1]{\begingroup\let\rlap\relax\thanks{#1}\endgroup}
\begin{document}

\title{Locating a robber with multiple probes}
\author{John Haslegrave\thankx{University of Warwick, Coventry, UK. {\tt j.haslegrave@cantab.net}}, Richard A. B. Johnson\thankx{The King's School, Canterbury, UK. {\tt rabj@kings-school.co.uk}}, Sebastian Koch\thankx{University of Cambridge, Cambridge, UK.}}
\maketitle

\begin{abstract}
We consider a game in which a cop searches for a moving robber on a connected graph using distance probes, which is a slight variation on one introduced by Seager (Seager, 2012). Carragher, Choi, Delcourt, Erickson and West showed that for any $n$-vertex graph $G$ there is a winning strategy for the cop on the graph $\gm$ obtained by replacing each edge of $G$ by a path of length $m$, if $m\geq n$ (Carragher et al., 2012). The present authors showed that, for all but a few small values of $n$, this bound may be improved to $m\geq n/2$, which is best possible (Haslegrave et al., 2016b). In this paper we consider the natural extension in which the cop probes a set of $k$ vertices, rather than a single vertex, at each turn. We consider the relationship between the value of $k$ required to ensure victory on the original graph with the length of subdivisions required to ensure victory with $k=1$. We give an asymptotically best-possible linear bound in one direction, but show that in the other direction no subexponential bound holds. We also give a bound on the value of $k$ for which the cop has a winning strategy on any (possibly infinite) connected graph of maximum degree $\Delta$, which is best possible up to a factor of $(1-o(1))$.
\end{abstract}

\section{Introduction}
Search games and pursuit games on graphs have been widely studied, beginning with a graph searching game introduced by Parsons \cite{Par76} in which a fixed number of searchers try to find a lost spelunker in a dark cave. The searchers cannot tell where the target is, and aim to move around the vertices and edges of the graph in such a way that one of them must eventually encounter him. The spelunker may move around the graph in an arbitrary fashion and at unlimited speed, and in the worst case may be regarded as an antagonist who knows the searchers' positions and is trying to escape them. Parsons \cite{Par78} subsequently introduced the \textit{search number}, $s(G)$, of a graph $G$, being the minimum number of searchers required to guarantee catching the spelunker. Megiddo, Hakimi, Garey, Johnson and Papadimitriou \cite{MHGJP} showed that computing the search number of a general graph is an NP-hard problem. From the subsequent proof by LaPaugh \cite{LaP93} that any number of searchers who can find the spelunker can do so while ensuring the spelunker cannot return to a previously searched edge, it follows that the decision problem is in NP (and hence NP-complete).

Robertson and Seymour \cite{RS83} introduced the concepts of path decompositions and the pathwidth, $\operatorname{pw}(G)$ of a graph $G$, which has deep connections to the theory of graph minors and to algorithmic complexity. Minor-closed families which have a forbidden forest are precisely those with bounded pathwidth \cite{RS83}, and many algorithmic problems which are difficult in general have efficient algorithms for graphs of bounded pathwidth \cite{Arn85}. Ellis, Sudborough and Turner \cite{EST83} independently defined the vertex separation number of a graph, and Kinnersley subsequently showed the equivalence of these two definitions \cite{Kin92}. Ellis, Sudborough and Turner showed that the search number of a graph is almost completely determined by its pathwidth, giving bounds of $\operatorname{pw}(G)\leq s(G)\leq\operatorname{pw}(G)+2$ \cite{EST94}.

Graph pursuit games date back to the classical Cops and Robbers game, introduced independently by Quillot \cite{Qui78} and Nowakowski and Winkler \cite{NW83} (who attribute it to G. Gabor). This involves one or more cops and a robber moving around a fixed graph. The cops move simultaneously and alternate moves with the robber, all moves being to neighbouring vertices. The cops win if one of them occupies the robber's location. On a particular graph $G$ the question is whether a given number of cops have a strategy which is guaranteed to win, or whether there is a strategy for the robber which will allow him to evade capture indefinitely. The \textit{cop number} of a graph is the minimum number of cops that can guarantee to catch the robber. An important open problem is Meyniel's conjecture, published by Frankl \cite{Fra87}, that the cop number of any $n$-vertex connected graph is at most $O(\sqrt{n})$.

Models which focus on finding an invisible target rather than catching a visible one have been the focus of much recent work. In the Hunter versus Rabbit game, studied by Adler, R\"acke, Sivadasan, Sohler and V\"ocking \cite{ARSSV} and the Cop versus Gambler game, studied by Komarov and Winkler, \cite{KW16} the aim is to catch a randomly-moving target as quickly as possible; in both cases the searcher is restricted to moving on the edges of the underlying graph, but the target is not. Related models can be used to design protocols for ad-hoc mobile networks \cite{CNS01}. In Hunter versus Rabbit, the rabbit's strategy is unrestricted; Adler et al.\ showed that the hunter can achieve expected capture time $O(n\log n)$, while for some graphs the rabbit can achieve expected survival time $\Omega(n\log n)$ \cite{ARSSV}. In Cop versus Gambler, the gambler's strategy is simply a probability distribution on the vertices, and his location at different time steps is independent. In this setting Komarov and Winkler showed that a cop who knows this probability distribution can achieve expected capture time $n$, which is trivially best possible for the uniform distribution, and a cop who does not know the distribution can still achieve capture time $O(n)$ \cite{KW16}.

In the problem variously referred to as Cat and Mouse, Finding a Princess, and Hunter and Mole, it is the target which is constrained to moving around a graph, and the searcher probes vertices one by one in an unrestricted manner. In between any two probes the target must move to an adjacent vertex. The searcher wins if she probes the target's location, but gets no information otherwise. A complete classification of graphs on which the searcher can guarantee to win was given by Haslegrave \cite{Has11,Has14} and subsequently and independently by Britnell and Wildon \cite{BW13} and Komarov and Winkler \cite{KW13}. 

In real-life searching we might gain information about how close an unsuccessful probe is. The simplest search model of this form is Graph Locating, independently introduced by Slater \cite{Sla75} and by Harary and Melter \cite{HM76}. In this model a set of vertices is probed and each probe reveals the graph distance to a stationary target vertex; the searcher wins if she can then determine the target's precise location. The minimum number of probes required to guarantee victory on the graph $G$ is its \textit{metric dimension}, $\mu(G)$. If the probed vertices are instead chosen sequentially, with each choice potentially depending on the results of previous probes, it may be possible to ensure victory with fewer probes; this is the Sequential Locating game, studied by Seager \cite{Sea13}.

In this paper we consider the Robber Locating game, introduced in a slightly different form by Seager \cite{Sea12} and further studied by Carragher, Choi, Delcourt, Erickson and West \cite{CCDEW}, as well as by the current authors \cite{HJKa, HJKb}. This combines features of the other games mentioned above. Like the Sequential Locating game, the aim is to deduce the target's location from distance probes, but, like Cops and Robbers or Cat and Mouse, the target moves around the graph in discrete steps. A single cop and robber take turns to act. For ease of reading we shall refer to the cop as female and the robber as male. The cop, who is not on the graph, can probe a vertex at her turn and is told the distance to the robber's current location. If at this point she can identify the robber's precise location, she wins. At the robber's turn he may move to an adjacent vertex. (The original version of the game also had the restriction that the robber may not move to the vertex most recently probed, but subsequent work has generally permitted such moves.)

In the Sequential Locating game the searcher can guarantee to win eventually, simply by probing every vertex, and the natural question is the minimum number of turns required to guarantee victory on a given graph $G$. In the Robber Locating game, by contrast, it is not necessarily true that the cop can guarantee to win in any number of turns. Consequently the primary question in this setting is whether, for a given graph $G$, the cop can guarantee to catch a robber who has full knowledge of how she will act. (On a finite graph, this is equivalent to asking whether there is some fixed number of turns in which she can guarantee victory.) We say that a graph is \textit{locatable} if she can do this and \textit{non-locatable} otherwise. 

The main result of Carragher et al.\ \cite{CCDEW} is that for any graph $G$ a sufficiently large equal-length subdivision of $G$ is locatable. Formally, write $\gm$ for the graph obtained by replacing each edge of $G$ by a path of length $m$, adding $m-1$ new vertices for each such path. Carragher et al.\ proved that $\gm$ is locatable whenever $m\geq \min\{\abs{V(G)},1+\max\{\mu(G)+2^{\mu(G)},\Delta(G)\}\}$. In most graphs this bound is simply $\abs{V(G)}$, and they conjectured that this was best possible for complete graphs, i.e.\ that $K_n^{1/m}$ is locatable if and only if $m\geq n$. The present authors showed that in fact $K_n^{1/m}$ is locatable if and only if $m\geq n/2$, for every $n\geq 11$ \cite{HJKa}, and then subsequently that the same improvement on the upper bound may be obtained in general: provided $\abs{V(G)}\geq 23$, $\gm$ is locatable whenever $m\geq\abs{V(G)}/2$ \cite{HJKb}. This bound is best possible, since $K_n^{1/m}$ is not locatable if $m=(n-1)/2$, and some lower bound on $\abs{V(G)}$ is required for it to hold, since $K_{10}^{1/5}$ is not locatable \cite{HJKa}. These results fundamentally depend on taking equal-length subdivisions, and do not imply any results for unequal subdivisions, since subdividing a single edge of a locatable graph can result in a non-locatable graph (as observed independently by Seager \cite{Sea14} and in \cite{HJKa}); however, the present authors showed that an unequal subdivision is also locatable provided every edge is subdivided into a path of length at least $2\abs{V(G)}$ \cite{HJKb}.

\section{Multiple probes vs subdivisions}

Subdividing the edges of $G$ tends to favour the cop both by slowing down the robber's movement around the graph and by giving her extra vertices to probe. Another way in which we might make the game easier for the cop is to allow her to choose a set of $k$ vertices to probe at each turn, rather than a single vertex. Formally, we define the $k$-probe version of the game as follows. At the cop's turn she chooses a list $\seq uk$ of vertices, and is then told the corresponding list $d(v,u_1),\ldots,d(v,u_k)$ of distances to the robber's location, $v$. (Note that the cop must specify all $k$ vertices before being told any of the distances.) If, from this and previous information, she can deduce $v$, she wins. At the robber's turn, as before, he may move to a neighbouring vertex. We say $G$ is $k$-locatable if the cop can guarantee to win the $k$-probe version of the game, that is to say if she has a deterministic strategy which will succeed against any possible sequence of moves for the robber. 

The game thus provides two natural graph invariants. Write $\rlp(G)$ for the minimum value of $k$ such that $G$ is $k$-locatable, $\rls(G)$ for the smallest value such that $\gm$ is locatable whenever $m\geq\rls(G)$. (We use the subscripts p and s to indicate that the two quantities are the minimum number of probes and subdivisions respectively required for the cop to win.) We investigate the relationship between the two, showing that $\rls(G)\leq(2+o(1))\rlp(G)$ (the factor of $2$ is best possible), but that no subexponential bound holds in the other direction. In Section~\ref{delta} we also bound $\rlp(G)$ for connected, but not necessarily finite, graphs of maximum degree $\Delta$; our bound is best-possible up to a lower order error term, as shown by the infinite regular tree. We will always assume that the graph $G$ is connected; provided there are only countably many components the cop can always reduce the problem to the connected case by probing components one by one until the component containing the robber is identified.

Recall that $\gm$ is the graph obtained by replacing each edge of $G$ with a path of length $m$ through new vertices. Each such path is called a \textit{thread}, and a \textit{branch vertex} in $\gm$ is a vertex that corresponds to a vertex of $G$. We write $u\thrd v$ for the thread between branch vertices $u$ and $v$.  We use ``a vertex on $u\cdots v$'' to mean any of the $m+1$ vertices of the thread, but ``a vertex inside $u\cdots v$'' excludes $u$ and $v$. When $m$ is even we use the term ``midpoint'' for the central vertex of a thread, and when $m$ is odd we will use the term ``near-midpoint'' for either of the vertices belonging to the central edge of a thread. We say that vertices $\seq vr$ form a \textit{resolving set} for $W\subseteq V(G)$ if the vectors $(d(w,v_1),\ldots,d(w,v_r))$ are different for all $w\in W$.

\medskip
We first note that $\rlp(G)$ and $\rls(G)$ do not exist unless $G$ is countable. Even if $G$ is countable they may not exist, as shown by the infinite clique. 
\begin{lem}\label{countable}For any integer $k$, if $G$ is $k$-locatable then $V(G)$ is countable.\end{lem}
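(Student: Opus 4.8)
The plan is to exploit the fact that the cop gathers only finitely much information per turn and must win in finitely many turns, so the total information available to her forms a countable set; an uncountable vertex set would then force two vertices to be indistinguishable. First I would fix a deterministic winning strategy $\sigma$ for the cop. Since $G$ is connected, every distance is a nonnegative integer, so the reply to a single $k$-probe is a vector in $\mathbb{Z}_{\ge 0}^{k}$, which is a countable set. Because $\sigma$ is deterministic, the cop's first probe is fixed, and every later probe, as well as her eventual declaration of the robber's position, is a function of the sequence of replies received so far. Hence each play is recorded by its \emph{reply history}, a finite sequence of vectors in $\mathbb{Z}_{\ge 0}^{k}$, and the set $\mathcal H=\bigcup_{t\ge 1}\bigl(\mathbb{Z}_{\ge 0}^{k}\bigr)^{t}$ of all such histories is countable, being a countable union of finite products of countable sets.

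Next I would build an injection $\Phi\colon V(G)\to\mathcal H$. For each vertex $v$, consider the robber who stays at $v$ for the whole game. Against him the replies are forced: writing $P_1,P_2,\dots$ for the probe sets $\sigma$ plays and $r_i=\bigl(d(u,v)\bigr)_{u\in P_i}$ for the induced reply vectors, each $P_{i+1}$ is exactly the probe set that $\sigma$ assigns to the history $(r_1,\dots,r_i)$. Since $\sigma$ is winning, at some least turn $t(v)$ it must declare a position from the history $(r_1,\dots,r_{t(v)})$, and, being correct against this robber, that declaration is $v$. Define $\Phi(v)=(r_1,\dots,r_{t(v)})\in\mathcal H$. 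Injectivity is then immediate: if $\Phi(v)=\Phi(w)$, then $\sigma$ is presented with one and the same reply history, and since its declaration depends only on that history it declares a single vertex; but correctness against the stationary robbers at $v$ and at $w$ forces that vertex to be both $v$ and $w$, so $v=w$. As $\Phi$ embeds $V(G)$ into the countable set $\mathcal H$, we conclude that $\abs{V(G)}$ is countable, uniformly in $k$.

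The main obstacle is precisely this indistinguishability argument, and it is worth stating the principle cleanly: since everything the cop ever does is a function of the integer replies alone, identical reply histories force identical behaviour, so a correct strategy can never declare two different vertices on the same history. The only other point requiring care is the status of the stationary robber. Under the reading in which the robber \emph{may} remain where he is, staying put is a legal trajectory and the argument above is complete. If one instead insists that the robber move at every turn, I would use the same engine in the contrapositive form: the robber's true position at the moment of a correct declaration is itself a function of the reply history, so only countably many positions are ever declarable, and one then argues that against an uncountable vertex set the robber can perpetually keep his position outside this countable ``declarable'' set, contradicting that $\sigma$ wins. Either way the quantitative heart is the countability of $\mathcal H$, with the determinism of $\sigma$ doing the rest.
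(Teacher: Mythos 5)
Your proof is correct and is essentially identical to the paper's: both fix a deterministic winning strategy, run it against a stationary robber at each vertex $v$, observe that the resulting finite reply history lies in the countable set $\bigcup_{t\geq1}(\mathbb Z^k)^t$, and note that distinct vertices must yield distinct histories since the history determines the declared location. Your extra remark on the case where the robber is forced to move is unnecessary here, as the paper's game explicitly permits the robber to stay put.
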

\begin{proof}Suppose $G$ is $k$-locatable, and fix a winning strategy $\mathcal S$ for the cop. For each vertex $v$ let $t_v$ be the number of turns taken for 
strategy $\mathcal S$ to succeed against a robber who is stationary at $v$, and let $\boldsymbol s_t^{(v)}$ be the vector of distances returned in the cop's $t$th turn, for $t=1,\ldots,t_v$. Then $(\boldsymbol s_1^{(v)},\ldots\boldsymbol s_{t_v}^{(v)})$ are finite sequences with terms in $\mathbb Z^k$, and so the set of possible sequences is countable. Since each sequence determines the robber's location, they must all be distinct, so $V(G)$ is countable.\end{proof}
\begin{rem}If $G$ is uncountable then $G^{1/m}$ is uncountable for any $m$, so neither $\rlp(G)$ nor $\rls(G)$ exist. In fact, since we only consider a stationary robber, we have proved the stronger statement that there is no winning strategy for Seager's Sequential Locating game \cite{Sea13} on any uncountable graph.\end{rem}

\subsection{$\rls(G)$ is $O(\rlp(G))$}

In this section we show that $\rls(G)\leq 2\rlp(G)+2$ whenever $\rlp(G)$ exists, and also show that this bound cannot be improved to $a\rlp(G)+b$ for any $a,b\in\mathbb R$ with $a<2$ by giving examples of graphs $G_n$ with $\rlp(G_n)=n$ and $\rls(G_n)\geq 2n-O(\log(n))$. 

Note that $\rlp(G)\leq\mu(G)$, since if the cop probes $\mu(G)$ vertices which form a resolving set for $V(G)$ she will locate the robber immediately. Thus \theorem{linear} implies that $\rls(G)\leq 2\mu(G)+2$, a considerable improvement of the bound $\rls(G)\leq 1+\max\{\mu(G)+2^{\mu(G)},\Delta(G)\}$ given by Carragher et al.\ \cite{CCDEW}.

\begin{thm}\label{linear}If $G$ is $k$-locatable then $\gm$ is locatable provided $m\geq 2k+2$.\end{thm}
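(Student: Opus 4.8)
The plan is to take a winning strategy $\mathcal S$ for the cop in the $k$-probe game on $G$ and simulate it on $\gm$, exploiting the fact that the robber moves $m$ times more slowly there, so that a single move in $G$ corresponds to his traversing a whole thread. Two elementary facts about $\gm$ drive everything. First, shortest paths between branch vertices run through branch vertices (internal thread vertices have degree $2$), so $d_{\gm}(a,b)=m\,d_G(a,b)$ for branch vertices $a,b$. Second, and crucially, if the robber is at a vertex $p$ within distance strictly less than $m/2$ of a (necessarily unique) nearest branch vertex $b$, then for every branch vertex $w$ the returned value $d_{\gm}(p,w)=\min\bigl(j+m\,d_G(b,w),\,(m-j)+m\,d_G(c,w)\bigr)$, where $j<m/2$ is the offset and $c$ the other end of the robber's thread, lies strictly within $m/2$ of $m\,d_G(b,w)$; hence the cop recovers $d_G(b,w)$ exactly as the nearest integer to $d_{\gm}(p,w)/m$. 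I call this the \emph{rounding observation}: the recovered quantity is completely insensitive both to the robber's exact offset and to any movement he makes while staying within $m/2$ of $b$, which is precisely what lets $k$ \emph{sequential} single probes simulate $k$ \emph{simultaneous} probes despite the robber moving between them.

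With these tools I would organise the cop's play into rounds of at most $m$ turns, maintaining the invariant that she knows the robber's exact vertex at the start of each round. Since he then moves at most $m$ steps during a round, his nearest branch vertex --- his ``$G$-position'' --- advances by at most one edge of $G$, matching a single robber move in the $k$-probe game. Whenever the robber is within $m/2$ of a known branch vertex $b$, the cop spends $k$ probes on the vertices $\seq uk$ dictated by $\mathcal S$; the slack in $m\ge 2k+2$ keeps him within $m/2$ of $b$ throughout these $k$ turns, so the rounding observation returns the true vector $(d_G(b,u_1),\ldots,d_G(b,u_k))$ and advances the simulated game by one turn. Because the tracked sequence of $G$-positions is a legitimate robber walk in $G$ and the data fed to $\mathcal S$ is always the genuine distance vector for that walk, the guarantee of $\mathcal S$ applies and eventually pins the $G$-position to a single vertex; at that point a final probe of an endpoint $u$ of the robber's thread returns exactly his offset along it (the competing route costs $2m-j>j$), so the cop reads off his precise vertex and wins.

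The remaining probes in each round --- bringing the total to $2k+2$, and hence the requirement $m\ge 2k+2$ --- are spent re-establishing the exact-tracking invariant: before the robber strays more than $m/2$ from any branch vertex, the cop must identify which thread incident to $b$ he has entered and pin his offset. I expect this re-synchronisation, rather than the simulation, to be the main obstacle, because it must cope with a configuration that has no analogue in the game on $G$: a robber camped near the midpoint of a thread, where his $G$-position is ambiguous between the two endpoints and the rounding observation fails, the probe value encoding only $\min\bigl(d_G(u,w),d_G(v,w)\bigr)$. The resolution is that such a robber is confined to a single thread, which the cop can instead identify and then pin his exact vertex directly, winning outright; a robber who does approach within $m/2$ of a branch vertex lets the simulation proceed. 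Hedging over the two candidate endpoint positions is what forces the factor of two, while the additive $+2$ supplies the slack needed both to keep each round strictly shorter than a thread-traversal and to pin the exact offset. That the constant cannot be lowered below $2$ is exactly what the companion examples $G_n$, with $\rlp(G_n)=n$ and $\rls(G_n)\ge 2n-O(\log n)$, are designed to witness.
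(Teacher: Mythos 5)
Your high-level plan --- probe only branch vertices of $\gm$, use the rounding observation to recover $d_G(b,\cdot)$ for the robber's nearest branch vertex $b$, and thereby feed genuine distance vectors to the $k$-probe strategy $\mathcal S$ --- is exactly the paper's approach, and your rounding observation is correct and is the same arithmetic the paper uses. But the scheduling layer on top of it has a genuine gap. First, the invariant ``she knows the robber's exact vertex at the start of each round'' cannot be maintained (knowing his exact vertex is the winning condition), and even the intended weaker invariant about his $G$-position is never established from the initial, completely unknown configuration; the paper avoids needing any such invariant because $\mathcal S$ consumes only distance vectors, never the robber's position. Second, and more seriously, the trigger ``whenever the robber is within $m/2$ of a known branch vertex $b$, spend $k$ probes on the vertices dictated by $\mathcal S$'' does not guarantee that all $k$ probes see the same nearest branch vertex. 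The robber controls his own clock: he can sit at distance $\lfloor m/2\rfloor-1$ from $b$, wait until your block of $k$ probes begins, then cross the midpoint, so that the block returns a mixture of roundings to $b$ and to the other endpoint and the simulated input to $\mathcal S$ is corrupted. Your claim that ``the slack in $m\ge 2k+2$ keeps him within $m/2$ of $b$ throughout these $k$ turns'' is simply false without controlling \emph{when} the block starts relative to his position.

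Relatedly, your treatment of the midpoint-camping robber as a separate case (``confined to a single thread, so identify the thread and win outright'') only works if he camps forever; he may loiter for an arbitrary finite time and then proceed, and the cop cannot know in advance which he will do, so the two behaviours must be hedged against \emph{simultaneously}, turn by turn. This interleaving is precisely the content of the paper's two claims: whenever a probe shows the robber at a (near-)midpoint she probes a fresh branch vertex (so that a robber who never commits eventually has both endpoints of his thread probed and is caught), and each time he leaves a (near-)midpoint she restarts the block $a_1,\ldots,a_k$; since a near-midpoint is at distance at least $\lfloor m/2\rfloor\ge k+1$ from the branch vertex he is approaching, the whole block completes, with a fresh branch vertex interleaved, while his nearest branch vertex is unambiguously that endpoint. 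That counting, not a ``hedge over two candidate endpoints,'' is where the factor $2$ and the $+2$ actually come from. Finally, your endgame is also off: once $\mathcal S$ would have won, a single probe of an endpoint of the robber's thread does not locate a robber sitting at an unknown offset on an unknown thread; the paper arranges matters so that the simulation step for $v_t$ completes exactly when the robber is \emph{at} the branch vertex $v_t$, at which point identifying $v_t$ is identifying him.
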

\begin{proof}Suppose $G$ is $k$-locatable (and so, by \lemma{countable}, $V(G)$ is countable), and fix a winning strategy $\mathcal S$ in which the cop probes $k$ vertices of $G$ at each turn. We show how to use $\mathcal S$ to define a winning strategy for the cop with one probe per turn on $\gm$. In fact we can do this with the added restriction that the cop only probes branch vertices of $\gm$. Note that from the result of such a probe considered mod $m$ the cop can always determine the robber's distance to his nearest branch vertex; in particular, she can determine whether he is at a midpoint or near-midpoint, and whether he is at a branch vertex. Also, provided the robber is not at a midpoint, she can determine the distance between his nearest branch vertex and the branch vertex probed (being the nearest multiple of $m$ to the result of the probe), and hence the distance between the corresponding vertices of $G$.
 
\begin{clm}From any position, the cop can probe branch vertices of $\gm$ such that after at finitely many turns either she locates the robber or he reaches a midpoint or near-midpoint.\end{clm}
\begin{poc}If the robber does not reach a midpoint or near-midpoint, his closest branch vertex, $v$, cannot change. By probing branch vertices in order, the cop will eventually identify $v$. Then she starts probing branch vertices in order again. If the robber reaches $v$ she will recognise that he is at a branch vertex, which must be $v$, and win. If not then he will remain on a single thread $v\thrd w$, and once the cop probes $w$ she will win.\end{poc}
\begin{clm}Fix a set $\{\seq ak\}$ of branch vertices. Suppose the robber is at a midpoint or near-midpoint. Then the cop may probe branch vertices of $\gm$ such that after finitely many turns either she has won or all of the following hold:
\begin{enumerate}[(a)]
\item the robber is at some branch vertex, $v$;
\item he has not reached a branch vertex in the interim; and
\item she has probed every $a_i$ after the robber's last visit to a (near-)midpoint, and hence while the robber was closer to $v$ than to any other branch vertex.
\end{enumerate}\end{clm}
\begin{poc}The cop proceeds as follows. She starts by probing any branch vertex, and every time the result of a probe indicates that the robber was at a (near-)midpoint, she probes a new branch vertex (using an ordering which includes every branch vertex). Once the robber is no longer at a (near-)midpoint she starts probing $\seq ak$ in turn. Every time that the robber returns to a (near)-midpoint she resumes probing new branch vertices, and every time that he leaves the (near-)midpoint(s) she restarts probing $\seq ak$, beginning with $a_1$. If she finishes probing $\seq ak$ she resumes probing new branch vertices. She continues this process until either she has won or the robber is at a branch vertex.

In this way, the cop will probe at least one new branch vertex every $k+1$ turns, so she will eventually probe both endpoints of the robber's thread, unless he reaches a branch vertex first. But if she probes both endpoints while the robber remains inside a thread, she identifies the thread and so wins. If the robber reaches a branch vertex first, then since $m\geq 2k+2$ the cop will have probed at least $k+1$ vertices since the robber was last at a (near-)midpoint, and therefore she will have probed all of $A$ in that time, as required.\end{poc}

Write $m_1$ for the first (near-)midpoint the robber reaches, $v_1$ for the next branch vertex he reaches after leaving $m_1$, $m_2$ for the next (near-)midpoint after leaving $v_1$, and so on. Note that $v_{i+1}$ is either equal to $v_i$ or adjacent to it in $G$, and so $v_1v_2\cdots$ is a possible trajectory for the robber in the $k$-probe game on $G$. We know that strategy $\mathcal S$ locates the robber in that game; write $A_i$ for the set of vertices probed by the cop at turn $i$ when playing strategy $\mathcal S$ against a robber following trajectory $v_1v_2\cdots$.

The cop will alternate between using Claim~1.1 to force the robber to $m_i$ and using Claim~1.2 with set $A_i$ to force him to $v_i$. We show that, assuming she has not yet won, she has enough information to do this by induction: she does for $i=1$ because $A_1$ is fixed; for $i>1$, $A_i$ depends only on the distances (in $G$) of vertices in $A_j$ to $v_j$ for $j<i$, which the cop will be able to deduce using (c) above. In this manner she ensures for each $i$ that either the robber reaches $v_i$ in finite time or he is caught before reaching it. 

If the cop would have located the robber on her $t$th turn playing strategy $\mathcal S$ against a robber following trajectory $v_1v_2\cdots$ on $G$, then  she has enough information to identify $v_t$ before the robber leaves it, and so locates him.\end{proof}

In fact the factor of $2$ in \theorem{linear} is best possible, as there are $k$-locatable graphs for which subdivisions of length at least $(2-o(1))k$ are required. We use the notation $[n]$ for the set $\{1,\ldots,n\}$ and $\level nk$ for the set of $k$-element subsets of $[n]$. Define the graph $G_{n,k}$, where $1\leq k<n$ as follows. Take as a vertex set $\{v_i\mid i\in[n]\}\cup\{w_A\mid A\in\level nk\}$; for each $A\neq B\in\level nk$, add the edge $w_Aw_B$, and for each $i\in[n]$ and $A\in\level nk$ such that $i\not\in A$, add the edge $v_iw_A$. Figure~1 shows $G_{4,2}$.

\begin{figure}[h]
\begin{center}
\begin{tikzpicture}[scale=0.9]
\tikzstyle{vertex}=[draw, shape=ellipse, minimum size=5pt, fill=black, inner sep=0pt]
\foreach \x/\y/\name in {5/0/A, 4/1/B, 6/1/C, 4/3.5/D, 6/3.5/E, 5/4.5/F}
{\node[vertex] (\name) at (\x, \y) {};}
\foreach \x/\y in {}
{\node[vertex] at (\x, \y) {};}
\foreach \from/\to in {A/B, A/C, A/D, A/E, A/F, B/C, B/D, B/E, B/F, C/D, C/E, C/F,
D/E, D/F, E/F}
{\draw (\from) to (\to);}
\foreach \x/\y/\label in {5/-0.4/$w_\text{\{1, 2\}}$, 3.4/0.8/$w_\text{\{1, 4\}}$,
6.6/0.6/$w_\text{\{1, 3\}}$, 3.1/3.4/$w_\text{\{2, 3\}}$, 6.6/3.7/$w_\text{\{2,
4\}}$, 5/4.9/$w_\text{\{3, 4\}}$}
{\node at (\x, \y) {\Large \label};}
\foreach \y/\name in {0/4, 1.5/3, 3/2, 4.5/1}
{\node[vertex, color=red] (\name) at (-3, \y) {};
\node at (-3.5,\y) {\Large $v_{\name}$};}
\foreach \from/\to in {1/D, 1/E, 1/F, 2/B, 2/C, 2/F, 3/A, 3/B, 3/E, 4/A, 4/C, 4/D}
{\draw[color=red] (\from) to (\to);}
\draw (-3, 2.25) ellipse (2cm and 4cm);
\draw (5, 2.25) ellipse (3cm and 4cm);
\end{tikzpicture}
\end{center}
\caption{$G_{4, 2}$}
\end{figure}

\begin{lem}Provided $k\leq n-2$, $\rlp\bigl(G_{n,k}\bigr)=n-1$.\end{lem}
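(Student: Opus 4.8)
The plan is to prove the two inequalities $\rlp(G_{n,k})\le n-1$ and $\rlp(G_{n,k})\ge n-1$ separately. For the upper bound I would exhibit a resolving set of size $n-1$ and invoke $\rlp(G)\le\mu(G)$ noted above. The natural candidate is $\{v_1,\dots,v_{n-1}\}$. Since $G_{n,k}$ has diameter $2$, every distance lies in $\{0,1,2\}$, and the distance vector to this set is easy to read off: $v_j$ (for $j\le n-1$) has a $0$ in coordinate $j$ and $2$ elsewhere; $v_n$ is all $2$s; and $w_A$ has a $2$ in coordinate $i$ exactly when $i\in A$ and a $1$ otherwise. I would then check these vectors are pairwise distinct: the $w_A$ vectors contain no $0$ (separating them from every $v_j$ with $j\le n-1$) and, because $\abs A=k\le n-2<n-1$, contain at least one $1$ (separating them from $v_n$); and two distinct $w_A,w_{A'}$ are separated since $A\cap[n-1]=A'\cap[n-1]$ together with $\abs A=\abs{A'}=k$ forces $A=A'$. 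Hence $\mu(G_{n,k})\le n-1$ and so $\rlp(G_{n,k})\le n-1$.

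The substance is the lower bound: the cop cannot win with only $n-2$ probes per turn. I would confine the robber to the clique $W=\{w_A:A\in\level{n}{k}\}$ and track the set of locations consistent with the cop's observations; the cop fails exactly when the robber can keep this consistent set of size at least $2$ forever. The clique structure is what makes this tractable: any two-element subset of $W$ has neighbourhood containing all of $W$, so whatever distance-class of size at least $2$ survives a probe, after the robber's move the consistent set again contains the whole of $W$. Thus the entire lower bound reduces to a single static claim: no set $S$ of $n-2$ vertices resolves $W$, that is, for every such $S$ there are two distinct $w_A,w_{A'}$ at equal distance from every vertex of $S$.

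To prove that claim I would write a probe as $S=\{v_i:i\in T\}\cup\{w_B:B\in\mathcal B\}$ with $\abs T+\abs{\mathcal B}=n-2$, and analyse how $S$ separates $W$. Two vertices $w_A,w_{A'}$ receive the same distance from $v_i$ precisely when $i\in A\iff i\in A'$, and the same distance from $w_B$ unless one of $A,A'$ equals $B$; so the $v$-probes partition $W$ into classes according to the value $c=A\cap T$, the class of a given $c$ having size $\binom{n-\abs T}{k-\abs c}$, while each $w$-probe merely peels off one individual vertex as its own singleton class. Resolving $W$ therefore forces $\abs{\mathcal B}$ to be at least the total ``excess'' $\sum_c\bigl(\binom{n-\abs T}{k-\abs c}-1\bigr)^+$ over the classes. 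Writing $u=n-\abs T$, the constraint $\abs T+\abs{\mathcal B}=n-2$ gives the key identity $u=2+\abs{\mathcal B}$, which is exactly what makes the counting fail.

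The main obstacle — and the only genuine calculation — is to show this excess always exceeds $\abs{\mathcal B}=u-2$, against every split of the $n-2$ probes between $v$'s and $w$'s. I expect to handle this by exhibiting a single over-full class. When $\abs T\ge k-1$ I would take $c$ of size $k-1$: its class has size $\binom u1=u$, giving excess $u-1>\abs{\mathcal B}$. When $\abs T\le k-2$ I would instead take $c=T$: here $2\le k-\abs T\le u-2$ (the upper bound using $k\le n-2$), so the class has size at least $\binom u2$, and $\binom u2-1>u-2$ for all $u\ge4$. Either way $W$ cannot be resolved, so the robber evades and $G_{n,k}$ is not $(n-2)$-locatable; combined with the upper bound this yields $\rlp(G_{n,k})=n-1$.
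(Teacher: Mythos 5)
Your proof is correct and follows essentially the same route as the paper's: the upper bound via the resolving set $\{v_1,\dots,v_{n-1}\}$, and the lower bound by confining the robber to the clique $W$ and showing that any $n-2$ probes leave some intersection-pattern class $\{w_A : A\cap T=c\}$ with at least two unprobed, undistinguished members (your two choices of $c$ correspond exactly to the paper's $t=\min\{k-1,r\}$). The only cosmetic difference is your ``total excess'' framing, which you then instantiate with a single over-full class just as the paper does.
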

\begin{proof}Consider the effect of probing $\seq v{n-1}$. If the robber is at one of those vertices, he has certainly been located. If he is at $v_n$, each probe will return distance $2$, whereas if he is at some $w_A$ at least one probe will return $1$, so in the former case he is located. If he is at some $w_A$ then from the distances to each of $\seq v{n-1}$ the cop can deduce the distance to $v_n$, since exactly $k$ of these $n$ distances must be $1$, and so she can deduce whether $i\in A$ for each $i$, and hence determine $A$.

To complete the proof we show that the graph is not $(n-2)$-locatable. This is true even if the robber is confined to the set $W=\{w_S\mid S\in\level nk\}$: we show that for any set of $n-2$ probes there is some $A\neq B\in\level nk$ such that the probes fail to distinguish $w_A$ and $w_B$. This is certainly true if none of the vertices probed are among the $v_i$, since then there are at least two unprobed vertices in $W$, and these have distance $1$ from every probed vertex. Suppose that exactly $r>0$ of the $v_i$ and $n-r-2$ vertices in $W$ are probed, and let $t=\min\{k-1,r\}$. There are then $\binom{n-r}{k-t}\geq n-r$ vertices in $W$ which are adjacent to the first $t$ of those $r$ vertices, but not to the remaining $r-t$. At least two of these vertices are unprobed, and these two vertices have the same distance as each other from every probed vertex.

Consequently, at each of the cop's turns she must leave some two vertices in $W$ undistinguished, and both of these are possible locations for the robber since from any vertex in $W$ he can reach either of them. So the cop cannot guarantee to locate the robber at any point.\end{proof}

\begin{lem}Provided $\binom{n-k-\floor{m/2}}k\geq 2m+2$, $\rls\bigl(G_{n,k}\bigr)>m$.\end{lem}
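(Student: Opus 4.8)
The plan is to prove the contrapositive-style statement directly: I would exhibit a strategy that lets the robber evade capture forever on $G_{n,k}^{1/m}$, so that this graph is non-locatable and hence $\rls(G_{n,k})>m$. The natural place for the robber to hide is among the threads joining two $w$-vertices: since the $w_A$ form a clique, $G_{n,k}^{1/m}$ contains a subdivided clique $K_{\binom nk}^{1/m}$, and in the present regime $\binom nk$ is enormous compared with $m$, so this subdivided clique is on its own non-locatable. The whole point is therefore to show that the extra vertices $v_i$ and their threads do not give the cop enough additional power. The robber will stay within one step of the midpoint of a thread $w_A\thrd w_B$, so that his position is always ``near the midpoint of some thread'', reducing the cop's task to identifying the unordered pair of endpoint sets $\{A,B\}$; throughout he maintains a family $\mathcal F$ of at least two candidate threads consistent with every probe so far, and moves so as to keep $\abs{\mathcal F}\ge 2$ indefinitely.

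First I would carry out the distance bookkeeping. Writing $a$ and $b=m-a$ for the robber's distances to the two branch endpoints, one checks that a probe at a branch vertex $w_C$ returns the generic value $m+\min(a,b)$ unless $C\in\{A,B\}$, in which case it returns $\min(a,b)$ or $m-\min(a,b)$; such a probe thus reveals \emph{only} whether $C$ is an endpoint of the robber's thread. Similarly a probe at $v_i$ returns the generic value unless $i$ lies in the endpoint set on the robber's side of the midpoint, so while he shuttles across the central (near-)midpoint edge each such probe reveals at most the membership of a single index $i$ in the nearer endpoint. The conclusion of this step is the crucial dichotomy: each probe can implicate either a single branch vertex (as a possible endpoint) or a single index $i$ (as a member of the near endpoint), and nothing more.

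Next I would bound how fast the cop can shrink $\mathcal F$ and describe how the robber refreshes. A robber who sits on one thread forever loses, since the cop can probe branch vertices $w_C$ one at a time and eventually hit both endpoints; so he must migrate to genuinely new threads. The delicate point, which I expect to be the main obstacle, is carrying out these transitions through branch vertices: a robber standing at a branch vertex is located instantly, so he must always keep at least two possible hub vertices and route through them in tandem, using the length-$m$ threads to install a fresh, large family of candidate far-endpoints before the old ambiguity is exhausted. Organising this as a sequence of phases, each lasting at most $2m+2$ turns, and showing that two candidate threads survive every phase, is where the argument is genuinely technical; the distance dichotomy from the previous step is exactly what keeps the bookkeeping tractable.

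Finally I would assemble the counting. During the half-traversal from a midpoint out to a hub the robber is off-centre for about $m/2$ turns, so by the dichotomy the cop can implicate at most $\floor{m/2}$ indices with respect to the endpoint being abandoned; to keep the new far-endpoint ambiguous the robber draws it from the $k$-subsets of $[n]$ avoiding those $\floor{m/2}$ indices together with the $k$ indices of the surviving hub, leaving at least $\binom{n-k-\floor{m/2}}k$ admissible far-endpoints. Over a phase of at most $2m+2$ turns the cop can in addition eliminate only those candidate far-endpoints she probes directly, at most one per turn. Hence as long as $\binom{n-k-\floor{m/2}}k\ge 2m+2$ there are always at least two surviving candidates, so $\abs{\mathcal F}\ge 2$ is maintained, the robber is never located, and $\rls(G_{n,k})>m$ as claimed.
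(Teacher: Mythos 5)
You have the right general shape---confine the robber to the subdivided clique on $W$, and count how many candidates for the far endpoint of his current thread the cop can rule out per traversal---and your binomial coefficient $\binom{n-k-\floor{m/2}}{k}$ arises for essentially the paper's reason. But there is a genuine gap exactly where you flag one: the transitions through branch vertices. Your robber loiters near midpoints and must ``route through hubs in tandem,'' and you never specify how the phases are organised or why two candidate threads survive each one; this is not a technicality that takes care of itself, it is the crux. The paper dispenses with it by \emph{strengthening the robber's handicap}: he never stops or turns around, traverses each thread at full speed, and \emph{announces} each branch vertex as he leaves it. With that device the game decomposes into independent $m$-turn traversals in which the only unknown is the far endpoint $w_B$; the cop can never locate him at a branch vertex, since she only learns which one it was after he has left, at which point the new far endpoint is again wide open. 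The midpoint-shuttling robber you propose is actively worse (as you note, a stationary-thread robber loses to sequential probing of branch vertices), and nothing in your sketch replaces the announcement device that makes the single-traversal analysis self-contained.

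There are also two substantive errors in the counting. First, your ``distance dichotomy'' is wrong in the half of the traversal nearer the known endpoint $w_A$: probing $v_i$ for $i\in A$ when the robber is at distance $a<m/2$ from $w_A$ returns $2m-a$ if $i\notin B$ and $2m+a$ if $i\in B$, so it reveals membership of $i$ in the \emph{far, unknown} endpoint, not merely in the near one. This is precisely why the $-k$ appears inside the binomial coefficient, so your robber's avoidance of the $k$ indices of the surviving hub is forced, not optional. Second, a single probe inside a thread $w_C\thrd w_D$ eliminates \emph{two} candidates for $B$, not one; the paper's tally is $2$ eliminations per turn over the $m$ turns of a traversal, i.e.\ $2m$ in total, and the hypothesis $\binom{n-k-\floor{m/2}}{k}\geq 2m+2$ is exactly what leaves $2$ surviving candidates. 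Your own tally ($1$ per turn over $2m+2$ turns) both undercounts the cop and, even taken at face value, leaves $0$ rather than $2$ survivors under the stated hypothesis.
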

\begin{proof}We'll play the game on $G_{n,k}^{1/m}$ with some restrictions. The robber must stay within the $w$ section (\ie $W$ together with all threads between vertices in $W$), and every time he gets to a branch vertex he must leave it at his next turn and proceed along some thread without stopping or turning round. Also, every time he leaves a branch vertex he will announce which one it was. We show that the cop cannot guarantee to identify the branch vertex the robber is approaching by the time he reaches it, and thus she cannot guarantee to locate the robber. When the robber leaves a branch vertex, say $w_A$, the cop has no information about the next branch vertex he is approaching (call this $w_B$). During the next $\ceil{m/2}$ probes, she can identify for each $i\in A$ whether or not $i\in B$, by probing $v_i$ or inside one of the threads leading from it, and she can additionally eliminate two possible candidates for $B$ per turn, by probing inside a thread of the form $w_C\thrd w_D$. Other probes are not helpful: probing $v_i$ for $i\not\in A$ gives no information, since the shortest path to the robber passes through $w_A$, and probing inside a thread meeting such a $v_i$ only eliminates one candidate for $B$. In the remaining $\floor{m/2}$ probes up to and including the time the robber is at $w_B$, she can determine whether $i\in B$ for $\floor{m/2}$ other values of $i$, and eliminate at most two other candidates for $B$ per turn. If each of the at most $\floor{m/2}+k$ values of $i$ the cop checks are not in $B$, and none of the at most $2m$ candidates for $B$ she tests are correct, there are at least $\binom{n-k-\floor{m/2}}k-2m$ possibilities for $B$ remaining. If this is at least $2$, she cannot guarantee to catch the robber before he next leaves a branch vertex.\end{proof}

Suppose $m=2\ceil{n-a\log n}$ and $k=\floor{b\log n}$. Then 
\begin{align*}\binom{n+1-k-m/2}{k}&\geq\binom{\ceil{(a-b)\log n}}{\floor{b\log n}}\\
&=\biggl(\bfrac{a-b}b^b+o(1)\biggr)^{\log n}\,.\end{align*}
If $\bfrac{a-b}b^b>\mathrm e$, as is the case when $a=3.6$ and $b=0.8$, then $\binom{{n+1}-k-m/2}{k}=\omega(n)$ and so for sufficiently large $n$ 
we have $\rls\bigl(G_{[n+1],k}\bigr)>m$. Thus we have established the following result.

\begin{thm}For all sufficiently large $n$ there is a finite graph $G$ with $\rlp(G)=n$ and $\rls(G)>2n-7.2\log n$.\end{thm}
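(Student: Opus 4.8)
The plan is to obtain the theorem as a direct packaging of the two lemmas just proved for the graphs $G_{n,k}$, by choosing the subdivision length $m$ and probe parameter $k$ as functions of $n$. Concretely I would set $G=G_{n+1,k}$ with $k=\floor{b\log n}$ and $m=2\ceil{n-a\log n}$, and fix the constants $a=3.6$, $b=0.8$ only at the end; the two bounds on $G$ then follow from the two lemmas applied with $n$ replaced by $n+1$.

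The probe number is immediate. The lemma giving $\rlp(G_{N,k})=N-1$ requires only $k\le N-2$; taking $N=n+1$ and using $k=\floor{b\log n}\le n-1$ for all large $n$ gives $\rlp(G)=n$ exactly, so this direction costs nothing.

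The substance is the lower bound on $\rls$. The lemma $\rls(G_{N,k})>m$ requires $\binom{N-k-\floor{m/2}}k\ge 2m+2$, so with $N=n+1$ I must check $\binom{(n+1)-k-\floor{m/2}}k\ge 2m+2$. Since $m$ is even, $\floor{m/2}=m/2=\ceil{n-a\log n}\le n-a\log n+1$, whence $(n+1)-m/2\ge a\log n$; subtracting $k\le b\log n$ leaves an upper argument of at least $(a-b)\log n$. By monotonicity of the binomial coefficient in its upper argument and the bound $\binom NM\ge(N/M)^M$,
\[\binom{(n+1)-k-m/2}{k}\ge\binom{\ceil{(a-b)\log n}}{\floor{b\log n}}=\Bigl(\bfrac{a-b}b^b+o(1)\Bigr)^{\log n}.\]
The crux is then purely numerical: with $a=3.6$, $b=0.8$ the base $\bfrac{a-b}b^b=3.5^{0.8}$ exceeds $\mathrm e$ (if only just), so the right-hand side is $\omega(n)$ and in particular eventually dominates $2m+2=O(n)$. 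The lemma therefore applies and $\rls(G)>m$.

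Combining the two bounds, $\rls(G)>m=2\ceil{n-a\log n}\ge 2(n-a\log n)=2n-7.2\log n$, as claimed. The only genuine obstacle is the joint calibration of $a$ and $b$: the error term is exactly $2a\log n$, so one wants $a$ as small as possible, but decreasing $a$ (with $b$ re-optimised) shrinks the base $\bfrac{a-b}b^b$, and the argument fails the instant this base falls to $\mathrm e$. Thus $2n-7.2\log n$ is essentially the strongest bound this family delivers through these two lemmas, and the one place actual checking is required is confirming that $a=3.6$, $b=0.8$ lie on the feasible side of the $\mathrm e$ threshold.
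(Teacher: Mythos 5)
Your proposal is correct and follows essentially the same route as the paper: the theorem is obtained by applying the two lemmas to $G_{n+1,k}$ with $m=2\ceil{n-a\log n}$, $k=\floor{b\log n}$, and $a=3.6$, $b=0.8$, using the estimate $\binom{\ceil{(a-b)\log n}}{\floor{b\log n}}=\bigl(\bfrac{a-b}b^b+o(1)\bigr)^{\log n}=\omega(n)$ since $3.5^{0.8}>\mathrm e$. Your numerical check and the observation that the constants sit just on the feasible side of the $\mathrm e$ threshold match the paper's calculation.
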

\subsection{$\rlp(G)$ is not $O(\rls(G))$}
In this section we show that $\rlp(G)$ can be exponentially large in terms of $\rls(G)$. Naively we might expect a linear bound, since a successful
strategy for $\gm$ may only have $m$ turns between visits of the robber to different branch vertices. However, each vertex probed can depend on the results of previous ones, so the number of vertices which the strategy could potentially probe in those $m$ turns can be large. 

Construct a graph $G_n$ as follows. There are four classes of vertices: $A, B, C$ and $D$. Each has $2^n$ vertices. Label the 
vertices in $B$ as $b0...0, \ldots, b1...1$ ($b$ followed by all binary words of length $n$), and similarly for $C$. Label the vertices 
of $A$ as $a, a1, a01, a11 \ldots$ ($a$ followed by all words of length at most $n$ which are either empty or end in 1), and 
similarly for D. The only edges are between adjacent classes: $A$--$B$, $B$--$C$ or $C$--$D$. All vertices in $B$ are adjacent to all in $C$. 
The vertices $ax$ and $by$ are adjacent if and only if $x$ is a prefix of $y$; similarly for $dx$ and $cy$. Figure~2 shows $G_2$.

\begin{figure}
\begin{center}
\begin{tikzpicture}[scale=0.8]
\tikzstyle{vertex}=[draw, shape=ellipse, minimum size=5pt, fill=black, inner sep=0pt]
\foreach \x/\y/\name in {0/0/A1, 0/2/A2, 0/4/A3, 0/6/A4, 4/0/B1, 4/2/B2, 4/4/B3,
4/6/B4, 7/0/C1, 7/2/C2, 7/4/C3, 7/6/C4, 11/0/D1, 11/2/D2, 11/4/D3, 11/6/D4}
{\node[vertex] (\name) at (\x, \y) {};}
\foreach \from/\to in {A1/B1, A1/B2, A1/B3, A1/B4, A2/B3, A2/B4, A3/B2, A4/B4,
D1/C1, D1/C2, D1/C3, D1/C4, D2/C3, D2/C4, D3/C2, D4/C4}
{\draw[color=red] (\from) to (\to);}
\foreach \from in {B1, B2, B3, B4}
{\foreach \to in {C1, C2, C3, C4}
{\draw (\from) to (\to);}}
\foreach \x in {-0.2, 4, 7, 11.2}
{\draw (\x, 3.25) ellipse (1cm and 4.5cm);}
\node at (-0.2, 7.1) {\Large $A$};
\node at (4, 7.1) {\Large $B$};
\node at (7, 7.1) {\Large $C$};
\node at (11, 7.1) {\Large $D$};
\foreach \x/\y/\name in {-0.4/0/a, -0.4/2/a1, -0.5/4/a01, -0.5/6/a11, 11.4/0/d,
11.4/2/d1, 11.5/4/d01, 11.5/6/d11, 4/-0.4/b00, 4/1.6/b01, 3.8/4.4/b10, 4/6.4/b11,
7/-0.4/c00, 7/1.6/c01, 7.2/4.4/c10, 7/6.4/c11}
{\node at (\x, \y) {\Large $\name$};}
\end{tikzpicture}
\end{center}
\caption{$G_2$}
\end{figure}

\begin{lem}For each $n\geq 2$, $\rls(G_n)=n+1$.\end{lem}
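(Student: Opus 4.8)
The plan is to prove the two inequalities $\rls(G_n)\le n+1$ and $\rls(G_n)>n$ separately, the first by exhibiting a cop strategy on $G_n^{1/m}$ for every $m\ge n+1$ and the second by exhibiting a robber strategy on $G_n^{1/n}$. The whole argument rests on two structural observations about $G_n$. First, every branch vertex in $B$ or $C$ has degree strictly greater than $2^n$: a vertex $bx$ is joined to all $2^n$ vertices of $C$ and to at least the vertex $a$ of $A$, and symmetrically for $C$. Thus naming the branch vertex that a robber in $B\cup C$ moves to requires strictly more than $n$ bits. Second, the prefix adjacencies convert a single probe into exactly one such bit: if the robber is in transit along a thread whose near end lies in $B$ and the cop probes a vertex $dw$ of $D$, then the returned distance equals $2m-i$ or $2m+i$ (where $i$ is the robber's offset along the thread) according to whether $w$ is or is not a prefix of the far end's label, so the sign of (distance $-\,2m$) reveals one prefix-bit of that label and nothing more. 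Probing $A$ plays the same role for a near end in $C$, and the analogous statements hold on the $A$-$B$ and $C$-$D$ sides.

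For the upper bound I would first have the cop locate the robber's nearest branch vertex and keep track of it, exactly as in the proof of \theorem{linear} (probe branch vertices in a fixed cyclic order until the nearest one is pinned down). Whenever the robber leaves a known branch vertex $u$ along a thread towards an unknown neighbour $w$, the cop identifies $w$ as follows. She spends one turn deciding which class $w$ lies in: for $u=bx$, probing the root $d$ separates the case $w\in C$ from $w\in A$, since on a $B$-$C$ thread the distance to $d$ is $2m-i$ while on a $B$-$A$ thread it is $2m+i$. She then spends $n$ further turns performing a binary search for the label of $w$ inside that class, at each turn querying the index $p1$ of the opposite probing class (always legal, as $p1$ ends in $1$) to learn the next bit. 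Because $m\ge n+1$ gives her $n+1$ informative turns before the robber reaches $w$, she knows $w$ by the time he arrives and hence his exact position; iterating, she tracks him forever and so locates him. Crossings that begin in $A$ or $D$ are easier, since then $w$ ranges over at most $2^n$ vertices and $n$ probes suffice.

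For the lower bound I would confine the robber to $B\cup C$ and the threads between them, and maintain the invariant that after every probe at least two positions are consistent with the cop's information. The point is the mismatch exposed above: a crossing on $G_n^{1/n}$ lasts $n$ turns, so the cop gathers at most $n$ bits, whereas the robber's destination branch vertex ranges over the more than $2^n$ neighbours of his current vertex, which cannot be distinguished by $n$ bits. Concretely, against any cop strategy the robber keeps alive, alongside his true destination, a phantom destination agreeing with it on every bit the cop has tested: for instance a second vertex of $C$, or a vertex of $A$ that mimics $c0\cdots0$ under every probe except one at the root $d$, the probe she cannot afford. Thus when the robber reaches the end of a crossing the cop still cannot name the branch vertex, the robber departs on the next crossing carrying the ambiguity forward, and she never reduces the consistent set to a single vertex. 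Hence $G_n^{1/n}$ is non-locatable, so $\rls(G_n)>n$, and with the upper bound $\rls(G_n)=n+1$.

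The main obstacle is the lower bound: one must show rigorously that \emph{no} cop strategy can extract more than one useful bit per turn and that a consistent phantom genuinely survives the entire length-$n$ crossing, which requires bookkeeping of the offset information carried by distances mod $m$ and careful treatment of the moments when the robber sits at a branch vertex. It is this accounting that must be pinned down to show the threshold is exactly $n+1$ and not $n$.
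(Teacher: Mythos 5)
Your overall architecture is the same as the paper's --- the upper bound via a binary search on the prefix gadgets (each probe at $dw$ or $aw$ returning $2m-i$ versus $2m+i$ and hence one bit of the far endpoint's label), and the lower bound via an adversary argument that a probe yields at most one useful bit per turn against a robber shuttling in $B\cup C$. However, both halves have genuine gaps. For the lower bound, the one-bit-per-probe claim \emph{is} the proof, and you explicitly defer it as ``the main obstacle''. What is actually needed, and what the paper supplies, is a case analysis over every possible probe location (in $A$, $B$, $C$, $D$, or inside each type of thread) showing that on the surviving candidate set of destinations the response takes at most two values apart from at most one candidate receiving a unique response; this yields the recursion $k\mapsto\ceil{(k-1)/2}$, and starting from the $2^n+1$ destinations in $C\cup\{a\}$ (the phantom $a$ is essential here, and it must be checked that probes inside threads $cx\thrd d$ cannot separate $a$ from the rest of $C$) one gets at least $2$ survivors after the $n$ probes available during a crossing of $G_n^{1/n}$. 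One also needs the timing argument at branch vertices: the robber waits at a branch vertex until the cop is about to probe somewhere that would discriminate, and only then departs, so the ambiguity is genuinely carried into the next crossing. None of this is routine bookkeeping; it is the content of the lemma.

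The upper bound also has two unaddressed failure modes. First, the robber need not cross monotonically: he may return to his starting branch vertex mid-thread, invalidating the partial binary search (and he may then leave along a different thread). The paper handles this with a potential argument --- each return strictly shrinks the set of possible locations in $B$ (resp.\ $C$), so at most $2^n$ restarts occur. Second, you assume the cop knows the exact branch vertex $u$ the robber departs from, but a robber moving between $B$ and $C$ can prevent her from ever pinning down his current vertex by enumeration (Claim~1.1 of \theorem{linear} only identifies the nearest branch vertex if the robber never reaches a midpoint); the paper's strategy instead works with a candidate set in $B$ and uses probes near threads to $c0\cdots0$ to disambiguate the all-zeros label from a thread heading into $A$. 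Your idea of spending the first turn probing the root $d$ to separate $C$-destinations from $A$-destinations is a nice simplification of that step, but the exact turn count ($1+n$ probes against $m\geq n+1$ available, with the last probe landing only when the robber is already at $w$) is precisely the accounting that distinguishes $n+1$ from $n$ and needs to be carried out, not asserted.
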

\begin{proof}The cop's strategy on $G_n^{1/m}$ for $m\geq n+1$ is as follows. First she probes branch 
vertices one by one until either a probe results in a multiple of $m$, indicating the robber is at a branch vertex, or two different probes have
given responses of less than $m$. If the latter happens first then the robber has not passed through a branch vertex and the cop has
identified both ends of the thread he is on, so has located him.

Suppose that a probe has given a multiple of $m$, indicating that the robber is at a branch vertex. From the result of the probe mod $2m$, the cop will also know either that he is in $A\cup C$ or that he is in $B\cup D$; assume without loss of generality 
the former. Probing $d$ next will tell her whether the robber was in $A$, was in $C$ and is still there, or was in $C$ and has moved. 
In the latter case she next probes at $a$, which will tell her 
whether the robber has returned to $C$ or not, and if not in whether he is on a thread between $B$ and $C$ or between $C$ and $D$. Now she probes branch vertices until the robber is at a branch vertex or is caught; if he reaches a branch vertex before being caught, the cop will know, from the information she had about his thread together with the result of her last probe mod $2m$, that he is at a branch vertex in a specific class. 

\begin{case}The result of a probe establishes that the robber is in $A$, or establishes that the robber is in $D$.\end{case}
Without loss of generality we assume the cop knows that the robber is in $A$. She probes vertices in $A$ which are possible locations 
for the robber, until she gets a response which is not a multiple of $m$, indicating that he has left $A$, or a response of $0$, which locates him. One of these must happen, since if he remains at his location in $A$ she will eventually probe it.

Once the robber moves away from $A$ he must be on a thread between a vertex in $A$ and one in $B$; the cop now
attempts to identify the endpoint in $B$. She does this by first probing $a1$ to find out whether it is of the form $b1x$ or $b0x$: a response less than $2m$ indicates that it is of the form $b1x$; a response greater than $2m$ indicates that it is of the form $b0x$, and a response of exactly $2m$ (or $0$) indicates that he has returned to $A$. 

If the robber does not return to $A$, the cop has identified the first digit of the endpoint of his thread which is in $B$. She then probes either $a11$ or $a01$, depending on this digit, in order to determine the second digit. By continuing in this manner, either she will identify that the robber has returned to his original vertex in $A$ or she will identify the endpoint of his thread in $B$ while he is still inside the thread. In the latter case she then probes 
vertices in $A$ in turn, allowing her to tell if he reaches the branch vertex in $B$ or to eventually find the other end of his current thread 
if he remains inside it. If he returns to $A$ she restarts this case; since the robber must have returned to the same branch vertex in $A$ she reduces the set of possible locations in $A$ with each iteration of this case, so she will locate him within $2^n$ iterations. 

\begin{case}The result of a probe establishes that the robber is in $B$, or establishes that the robber is in $C$.\end{case}
Without loss of generality we assume the cop knows that the robber is in $A$. She probes vertices at distance $1$ from $B$ which are on threads for which one endpoint is a possible location for the robber in $B$ and the other is $c0...0$. She does this until she gets a response which is not $\pm1$ mod $m$, indicating that the robber has moved. If he does not move she will eventually probe a vertex adjacent to him and win.

Once the robber leaves $B$, the cop will know whether he is on a thread leading to $c0...0$, since she will get a response of $0$ or $2m-2$ in this case and $2$ or $2m$ otherwise; since $m\geq n+1\geq 3$, $2m-2\neq 2$. If the robber is not on such a thread, the cop attempts to either identify the vertex in $C$ which is an endpoint of his thread or identify that no such vertex exists (\ie he is on a thread between $A$ and $B$). She does this as in Case~1 by probing first $d1$, then $d01$ or $d11$, as appropriate, and so on. If she detects that the robber has returned 
to $B$ (indicated by a result of exactly $2m$), she restarts this case; as before she reduces the set of possible locations in $B$ every time this happens, so at most $2^n$ iterations can occur. 

If the robber does not return to $B$, and is on a thread between $B$ and $C$, the cop will have time to complete this process while he is still on that thread, and since the endpoint in $C$ is not $c0...0$, she will have probed at least one vertex at distance less than $2m$. Thus she will have identified both that he is on a thread between $B$ and $C$ and the endpoint of that thread in $C$, either while he is at that branch vertex or while he is still inside the thread. In the latter case she probes vertices in $B$ until she locates the other end of the thread or establishes that the robber has reached the known vertex in $C$. 

If the robber does not return to $B$, and is on a thread between $A$ and $B$, again the cop will complete the process before he reaches $A$, and since in this case none of her probes will have given a result of less than $2m$, she will identify that he is on a thread between $A$ and $B$. If he has not reached $A$ when she identifies this, she probes vertices in $B$ in turn until she either locates him or identifies that he is in $A$, reducing to Case~1. 

This completes the proof that $G_n^{1/m}$ is locatable for $m\geq n+1$.

\medskip
Next we show that $G_n^{1/n}$ is not locatable. Again, we may restrict the robber: he may only visit branch vertices in $\{a,d\}\cup B\cup C$; each time he leaves a branch vertex he announces which vertex he has just left, and moves directly along a thread towards the next branch vertex. However, he may choose to remain stationary at a branch vertex. While the robber is permitted to move to $a$ or $d$, he will never actually do this; however, sometimes there will only be two possible locations which are consistent with the probe results, one of which is $a$.

Suppose the robber is at a branch vertex in $B\cup\{d\}$ (the case $C\cup\{a\}$ is equivalent), but the cop does not know which one (\ie there are at least two possibilities). The robber might remain at this branch vertex, so the cop needs to probe some vertex which eliminates at least one possible branch vertex. So she must probe a vertex in $B\cup\{d\}$, or in $A$, or inside a thread between $A$ and $B$ or between $B\cup\{d\}$ and $C$. Suppose when she does this that the robber has just left a branch vertex, which is not $d$ (since there were at least two possibilities, this is always possible). He is heading towards a vertex in $C\cup\{a\}$. If she has just probed a vertex in $B$ she has no further information about where the robber is heading. If she has just probed $d$, or a vertex in $A$ or between $A$ and $B$, she may be able to tell whether or not he is heading towards $a$, but cannot distinguish destinations in $C$. If she has just probed a vertex in $C$ or between $B$ and $C$ she can tell whether he is heading to a specific vertex in $C$, but cannot distinguish other destinations in $C\cup\{a\}$. Finally, if she has just probed inside the thread $cx\thrd d$ for some $x$ then she can tell whether he is heading to $cx$, but cannot distinguish other vertices in $C\cup\{a\}$ (the shortest route from anywhere inside this thread to the vertex adjacent to $by$ on the thread to $a$ is via $by$). Consequently, some response to the probe is consistent with at least $2^n$ possible destinations.

In order to win before the robber reaches the next branch vertex, the cop must identify the thread he is on by the time he reaches the end of it, so she has $n-1$ turns remaining to do this. If she probes a vertex in $A\setminus\{a\}$ or $B$, or on a thread between the two, she gets no additional information. If she probes a vertex in $C\cup\{a\}$ or on a thread between $B$ and $C\cup\{a\}$ then she may eliminate one possible thread, but will not distinguish between the rest. If she probes a vertex on a thread between $C$ and $D$, or a vertex in $D$, then she may eliminate one possible thread, but all other possible locations for the robber will give one of two possible responses. Thus, if there were $k$ possibilities before the probe, and the response to the probe is that consistent with the greatest number of those possibilities, at least $\ceil{(k-1)/2}$ possibilities remain. Since there were $2^n$ possibilities, at least $2$ will remain after $n-1$ additional probes, and so the cop cannot guarantee to locate him.\end{proof}

\begin{lem}For each $n\geq 1$, $\rlp(G_n)=2^n$.\end{lem}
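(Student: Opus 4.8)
The plan is to prove $\rlp(G_n)\le 2^n$ by exhibiting a cop strategy and $\rlp(G_n)\ge 2^n$ by an evasion strategy for the robber; I use throughout that the robber must move to a neighbour at each turn.

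Two facts about full-class probes drive the upper bound. Probing every vertex of $B$ locates the robber if he is in $B$ (some response is $0$) or in $A\setminus\{a\}$ (the vertices returning $1$ are exactly those $b_y$ with $x$ a prefix of $y$, which determines $x$); it returns the all-$1$s vector if he is at $a$ or anywhere in $C$, and the all-$2$s vector if he is in $D$. By the symmetry exchanging $A\leftrightarrow D$ and $B\leftrightarrow C$, probing every vertex of $D$ locates the robber throughout $C\cup D$. I would therefore probe $B$ once to read off the robber's region, and then probe $D$ repeatedly. Each probe of $D$ catches any robber currently in $C\cup D$, so a surviving robber is driven leftward: being forced to move and barred from re-entering $C$, he passes to $B$, then (since $a$ and $C$ both feed into $B$) is pinned to $B$, then can only reach $A$, whereupon a final probe of $B$ catches him as he is forced back out of $A$. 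This wins in a bounded number of turns using $2^n$ probes each.

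For the lower bound I would have the robber maintain the invariant that the set $S$ of vertices consistent with all responses so far always contains $\{a\}\cup C$ or $\{d\}\cup B$. The whole argument rests on the claim that no $2^n-1$ vertices resolve $\{a\}\cup C$ (equivalently $\{d\}\cup B$). Granting it, whenever $S\supseteq\{a\}\cup C$ the cop's probe leaves two vertices $p,q\in\{a\}\cup C$ with identical responses, so the robber keeps both possibilities alive. At least one of $p,q$ lies in $C$, so the union of their neighbourhoods contains every vertex of $B$ together with the empty-labelled vertex $d$; hence after the compelled move $S\supseteq\{d\}\cup B$, and the symmetric step repeats. Since $|S|\ge 2^n+1$ at every stage the robber is never pinned to a single vertex, so $G_n$ is not $(2^n-1)$-locatable.

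The hard part is the resolving claim, and I expect it to be the main obstacle. The vertices $a$ and $c_{0^n}$ are twins—every other vertex is equidistant from them—so any resolving set of $\{a\}\cup C$ must contain $a$, $d$, or $c_{0^n}$; since $a$ and $d$ are constant on $C$, this should reduce the claim to showing that resolving $C$ alone already needs $2^n-1$ probes, and then ruling out that a size-$(2^n-1)$ resolving set of $C$ can also separate $a$. For the resolving-$C$ bound I would use the $2^n-1$ ``forcing pairs'' $(c_{x0^{n-|x|}},\,c_{x10^{n-|x|-1}})$, one per proper prefix $x$, each separated only by $d_{x1}$ or by one of its own two endpoints, and induct on $n$ by splitting $C$ according to the last bit: the $2^{n-1}$ last-bit pairs consume $2^{n-1}$ probes drawn from pairwise-disjoint triples, while separating either residual half by its first $n-1$ bits recursively costs $r_{n-1}=2^{n-1}-1$. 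The delicate point is the bookkeeping of which probes can serve double duty across these two tasks—in particular verifying that an equality probe can never undercut the pure prefix-query cost $2^m-1$ at depth $m$—and I would expect most of the work to lie in this accounting.
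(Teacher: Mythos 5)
Your opening assumption --- ``the robber must move to a neighbour at each turn'' --- is not a rule of this game: the paper states that at his turn the robber \emph{may} move to an adjacent vertex, and the lower-bound arguments in the paper explicitly use his option to stay put. For your lower bound this only restricts the robber further, so it does no harm; but it is fatal to your upper bound. Your cop strategy (probe $B$ once, then $D$ repeatedly, then $B$ again) is built entirely on the robber being ``driven leftward'' by forced moves. Against the actual rules, a robber sitting at a vertex of $B$ returns the all-$2$s vector to every probe of $D$, which carries no information about \emph{which} vertex of $B$ he occupies, and he can sit there indefinitely; since your strategy is deterministic he knows when the next probe of $B$ is coming and hops to $C$ just beforehand, whereupon that probe returns all $1$s and again gives no positional information. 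This is exactly the oscillation used in the lower bound, so your strategy fails. The underlying problem is that probing one whole class never resolves positions within the opposite side; a winning $2^n$-probe strategy must in a \emph{single} turn combine a resolving set for one of $B$, $C$ with at least one probe that detects an escape to the other side (the paper probes half of $A$ together with half of $D$ first, pinning the robber to a pair $\{by0,by1\}$, then probes $\{a_{y1}\}\cup D\setminus\{d\}$, then $A$).

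The lower bound is structurally sound and close to the paper's (robber oscillates between $B$ and $C$, padding the consistent set with $a$ or $d$), but it rests entirely on the claim that no $2^n-1$ vertices resolve $\{a\}\cup C$, which you do not prove: your sketched induction on the last bit runs straight into the double-counting problem you yourself flag (the singletons $c_{x0},c_{x1}$ serve both as separators of last-bit pairs and as tests in the recursive half), and it is not clear it closes. There is a short argument you are missing: the only probes that separate any two vertices of $C$ are the singletons $\{c_y\}$ and the prefix sets $\{c_y: x\preceq y\}$ coming from $d_x$, and this collection of sets is \emph{laminar}; a laminar family of $k$ sets partitions the ground set into at most $k+1$ cells, so resolving the $2^n$ vertices of $C$ needs at least $2^n-1$ probes, and since $a$'s response vector to all such probes coincides with that of a vertex of $C$ lying in none of the chosen sets (a cell which is nonempty when only $2^n-1$ laminar sets are used and $a,d\notin R$), one extra probe is forced. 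As it stands, both halves of your argument have gaps: the upper bound is wrong for this game, and the lower bound's key counting claim is unestablished.
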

\begin{proof}We show that if the cop is permitted $2^n-1$ probes per turn, the robber can escape forever provided at every turn he knows which vertices
the cop will probe at her next turn. Suppose he constrains himself to moving between $B$ and $C$. If he is in $B$, the cop will 
get no information about which vertex in $B$ he is at unless she probes at least one vertex in $A\cup B$, so he need not move unless that happens. 
Similarly if he is in $C$ he need not move unless the cop is about to probe at least one vertex in $C\cup D$. If he moves, he can get to 
any vertex in the other class, so in order to win the cop must at some point probe a resolving set for $B$, say, together with some vertex in 
$C\cup D$. Since the smallest resolving set for $B$ has size $2^n-1$, and another probe is needed, this is not possible.

If the cop is permitted $2^n$ probes per turn, she may win as follows. In the first turn she probes the set $\{ax,dx\mid x\text{ has length}<n\}$. 
This will either locate the robber immediately or pin him down to a set of the form $\{by0, by1\}$ or $\{cy0, cy1\}$; assume without loss of generality he is in $\{by0, by1\}$.
On her next turn the cop probes $\{ay1\}\cup D\setminus\{d\}$. This either locates the robber, or shows that he is in $A\cup\{c0...0\}$. If the robber is in $A\cup\{c0...0\}$, next turn he must be in $A\cup B\cup\{c0...0,d\}$. Now the cop can win by probing $A$, since it is a resolving set for $A\cup B\cup\{c0...0,d\}$.\end{proof}

Thus we have shown the following result.
\begin{thm}For every $m\geq 3$ there is a finite graph $G$ with $\rls(G)=m$ and $\rlp(G)=2^{m-1}$.\end{thm}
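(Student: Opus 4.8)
The plan is to observe that this theorem is an immediate corollary of the two lemmas just established for the family $G_n$, so essentially no new work is required beyond a reindexing. First I would set $n=m-1$. Since $m\geq 3$, this gives $n\geq 2$, which is exactly the hypothesis needed for the lemma computing $\rls(G_n)$, and in particular $n\geq 1$, the (weaker) hypothesis needed for the lemma computing $\rlp(G_n)$. Thus both lemmas are applicable to $G_{m-1}$.

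Next I would simply apply the two lemmas to the graph $G=G_{m-1}$. The lemma giving $\rls(G_n)=n+1$ yields $\rls(G_{m-1})=(m-1)+1=m$, and the lemma giving $\rlp(G_n)=2^n$ yields $\rlp(G_{m-1})=2^{m-1}$. Hence $G=G_{m-1}$ is a finite graph realising both stated invariant values simultaneously, which is precisely what the theorem asserts.

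There is no genuine obstacle at this stage: all the difficulty lies in the two preceding lemmas, namely verifying that the cop can win on $G_n^{1/m}$ exactly when $m\geq n+1$ (requiring both the explicit winning strategy for $m\geq n+1$ and the escape argument for $m=n$), and in pinning the probe number down to exactly $2^n$ (the resolving-set lower bound for $B$ combined with the matching three-round probing strategy). Given those, the theorem is a one-line consequence. The only point deserving a sentence of care is confirming that the index shift keeps us inside the valid ranges of both lemmas, which it does since $m\geq 3$ forces $n=m-1\geq 2$; I would state this explicitly so that the reader sees the hypothesis $n\geq 2$ of the $\rls$-lemma is not violated.
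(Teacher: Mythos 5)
Your proposal is correct and matches the paper exactly: the theorem is stated as an immediate consequence of the two preceding lemmas applied to $G_{m-1}$, with the substitution $n=m-1$ giving $\rls(G_{m-1})=m$ and $\rlp(G_{m-1})=2^{m-1}$, and your check that $m\geq 3$ ensures $n\geq 2$ is the only hypothesis verification needed.
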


\section{Graphs of bounded degree}\label{delta}

In this section we obtain a general bound for $\rlp(G)$ (and hence, using \theorem{linear}, a bound of the same order on $\rls(G)$) in terms of the maximum degree $\Delta(G)$. We allow graphs to be infinite (but connected) in this case. By considering infinite regular trees we show that our bound is tight up to a factor of $1+o(1)$. In the case $\Delta=3$ we show that $\rlp(G)\leq3$, which is best possible. The case $\Delta(G)=2$ (\ie $G$ is a finite cycle or path, or infinite ray or path) is trivial: any such graph is $2$-locatable, since any two adjacent vertices form a resolving set for $V(G)$, and this is best possible since $C_3$ is not $1$-locatable.

\subsection{General quadratic bounds}
In this section we give a quadratic upper bound on $\rlp(G)$ in terms of $\Delta(G)$, and a lower bound on $\max\{\rlp(G)\mid\Delta(G)=\Delta\}$ which differs from our upper bound only in lower-order terms.

\begin{thm}For any connected graph $G$ with $\Delta(G)=\Delta$, $\rlp(G)\leq\floorf{(\Delta+1)^2}{4}+1$.\end{thm}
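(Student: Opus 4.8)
The plan is to give the cop a strategy on a connected graph $G$ of maximum degree $\Delta$ using $k=\floor{(\Delta+1)^2/4}+1$ probes per turn. The guiding principle should mirror the ``branch vertex'' strategy already used in \theorem{linear}: the cop wants to pin the robber down to a small region, then maintain enough probes to track him as he moves. Since the robber moves to a neighbour between probes, if the cop knows the robber is at some vertex $v$ (or has narrowed him to a small set $S$), then after one robber-move his location lies in $N[S]$. With maximum degree $\Delta$, a single known vertex expands to at most $\Delta+1$ candidates, and the key budgeting question is how many probes are needed to re-resolve this expanding set each turn while accounting for the new ambiguity the robber introduces.

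\textbf{First} I would set up the invariant the cop maintains: at the start of each of her turns she knows the robber lies in some candidate set $S$ of bounded size, and she probes a cleverly chosen set of $k$ vertices so that, whatever distances are returned and wherever the robber then moves, the new candidate set $S'$ again has size at most the same bound. The natural target bound is governed by the factor $(\Delta+1)^2/4$, which strongly suggests an argument where the robber's current position has up to $\Delta+1$ possibilities and each possibility in turn spawns up to $\Delta+1$ successors, but the product is halved --- presumably because probing the vertices themselves lets the cop rule out roughly half of each fan, or because the $\floor{\cdot}$ arises from splitting $\Delta+1$ into two nearly equal parts (AM--GM style, $ab\leq\floor{((a+b)/2)^2}$ when $a+b=\Delta+1$). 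I expect the cop to probe the candidate vertices directly (so that a return of $0$ locates the robber) together with vertices chosen to separate their neighbourhoods.

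\textbf{The central step} is the resolving argument: given the candidate set $S$, show that $k=\floor{(\Delta+1)^2/4}+1$ probes suffice to determine, for each possible robber-position $s\in S$ and each neighbour $s'$ of $s$, a distance-signature that distinguishes $s'$ from the other candidate successors --- or else to shrink $S$. Here the $+1$ almost certainly buys the cop one ``extra'' probe to break a final tie or to detect the robber sitting still (distance $0$). I would phrase this as: for each pair of distinct possible successor vertices, exhibit a probe vertex whose distances to them differ, and bound the number of such probes needed by $\floor{(\Delta+1)^2/4}$ by pairing up candidates and using that distances to a common reference vertex separate a vertex from all but few of its competitors.

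\textbf{The main obstacle} I anticipate is the resolving/counting step: controlling exactly how many probe vertices are required to distinguish all the successor candidates simultaneously, and proving the bound is $\floor{(\Delta+1)^2/4}$ rather than something larger like $(\Delta+1)^2$. The floor and the precise constant $1/4$ must come from a careful pairing argument --- likely splitting the fan of size $\Delta+1$ around each candidate into two groups and observing that a single well-placed probe resolves a whole group at once, so that the number of probes scales like the product of two halves of $\Delta+1$. A secondary subtlety is handling infinite graphs: the cop cannot enumerate all vertices, so the strategy must be genuinely local, depending only on the bounded neighbourhood of the current candidate set, and I would need to confirm that the invariant's bounded candidate set is maintained without ever appealing to finiteness (as \lemma{countable} only guarantees countability, not finiteness). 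Ensuring the strategy forces progress --- that the candidate set genuinely shrinks or the robber is caught, rather than the cop merely maintaining a bounded set forever --- will require an additional pigeonhole or potential argument to close out the win.
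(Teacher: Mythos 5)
There is a genuine gap here, and it is exactly where you locate it: the ``central step'' and the progress argument are not sketches of a proof but the entire missing content, and the invariant you build everything on cannot actually be established. First, there is no way to enter the invariant: after the cop's first turn the set of locations consistent with the responses is an intersection of spheres, which on the infinite $\Delta$-regular tree (or any large expander-like graph) is unbounded in size, so the cop never reaches a ``candidate set $S$ of bounded size'' to begin with. Second, the re-resolving step fails quantitatively: even if the cop knew the robber's exact position $v$ two turns ago, the $\Delta(\Delta-1)$ vertices at distance $2$ from $v$ in the $\Delta$-regular tree require about $\Delta(\Delta-2)$ probes to resolve (a probe beyond a neighbour $w$ of $v$ returns the same distance to every distance-$2$ vertex outside $w$'s branch, and within that branch it separates off at most one of the $\Delta-1$ candidates), which exceeds $\floorf{(\Delta+1)^2}{4}+1$ for large $\Delta$. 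So ``probe the candidates plus separators and fully re-resolve each turn'' is not merely unproven; it is impossible within the budget, and the weaker version (keep $S'$ bounded without resolving) has no progress mechanism, as you concede.

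The paper's strategy avoids all of this by tracking much less information, and in particular never maintains a small candidate set. The cop's invariant is: a reference vertex $v_t$, the \emph{exact} distance $d_t$ from $v_t$ to the robber, and a set of $k_t\leq\Delta$ neighbours $\seq w{k_t}$ of $v_t$ through which a shortest $v_t$--robber path may pass (the candidate set itself may be exponentially large in $d_t$). She then probes $v_t$, each $w_i$, and $\Delta-k_t$ further neighbours of each $w_i$, a total of $k_t(\Delta-k_t+1)+1\leq\floorf{(\Delta+1)^2}{4}+1$ vertices --- so your AM--GM guess for the source of the constant is right, but the two balanced factors are the number of candidate directions out of $v_t$ and the number of probed neighbours per direction, not ``current candidates times successors''. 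If some probe returns less than $d_t$ the distance drops; otherwise some $w_i$ returns exactly $d_t$ and the shortest path from $w_i$ avoids both $v_t$ and its $\Delta-k_t$ probed neighbours, so the invariant is restored with $d_{t+1}=d_t$ and $k_{t+1}<k_t$. Hence $(d_t,k_t)$ decreases lexicographically, $d_t$ reaches $0$ in finitely many turns, and the strategy is purely local, so it works on infinite graphs. This single monovariant supplies the progress argument your plan defers, and it is the idea your proposal is missing.
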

\begin{proof}We give a winning strategy for the cop using $\floorf{(\Delta+1)^2}{4}+1$ probes at each turn. On her first turn she probes arbitrary vertices; since $G$ is connected, all distances are finite. 

Suppose that, from the results of probes at the cops $t$th turn, she knows that the robber is at distance $d_t$ from some vertex $v_t$, and that the shortest path from $v_t$ to his location passes through one of $k_t$ neighbours of $v_t$, $\seq w{k_t}$. For each $i$ choose $\Delta-k_t$ neighbours of $w_i$, not including $v_t$. At her next turn the cop probes all these neighbours, together with $v_t$ and $\seq w{k_t}$. This is a total of at most
\[
k_t(\Delta-k_t+1)+1\leq\floorf{(\Delta+1)^2}{4}+1
\]
vertices, so she can always do this. If none of the distances returned is less than $d_t$, then at least one of the $w_i$ (say $w_1$) will return exactly $d_t$, and the shortest path from $w_1$ to the robber's location must not pass through $v_t$ or any of $\Delta-k_t$ other neighbours of $w_1$. So setting $v_{t+1}=w_1$, the cop is in the same position as before, with $d_{t+1}=d_t$ and $k_{t+1}<k_t$. Consequently $(d_t,k_t)$ is decreasing in the lexicographic ordering; since $k_t\leq\Delta$, the cop takes a bounded number of steps to catch the robber from any particular set of responses to the initial probe.
\end{proof}
\begin{thm}For some connected graph $G$ with $\Delta(G)=\Delta$, $\rlp(G)\geq\floorf{\Delta^2}{4}$.\end{thm}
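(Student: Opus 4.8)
The plan is to take $G$ to be the infinite $\Delta$-regular tree $T_\Delta$, which is exactly the graph singled out in the section introduction, and to exhibit a robber strategy that defeats any cop restricted to $\lfloor\Delta^2/4\rfloor-1$ probes per turn. I would phrase everything through the set of \emph{consistent positions}: writing $S_t$ for the set of vertices at which the robber could be located compatibly with all probe responses up to turn $t$, the cop wins precisely when some probe forces $|S_t|=1$, so it suffices to maintain an invariant guaranteeing that after each of the cop's turns some response class has size at least $2$ and admits a valid continuation. The whole difficulty is to find an invariant configuration that the robber can preserve forever and that is expensive for the cop to break, and the natural candidate is the tight case of the upper-bound strategy of the previous theorem.

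Concretely, I would maintain the following configuration: the cop knows only that the robber is at some fixed distance $d$ from a vertex $v$ and that the geodesic from $v$ to him leaves $v$ through one of a distinguished set $B$ of neighbours of $v$, with $|B|=\lceil\Delta/2\rceil$, while nothing she has learned separates the branches in $B$ from one another. Since each $w\in B$ has $\Delta-1$ further neighbours, each branch still offers $\Delta-|B|=\lfloor\Delta/2\rfloor$ onward directions, and the product $|B|\cdot(\Delta-|B|)=\lceil\Delta/2\rceil\lfloor\Delta/2\rfloor=\lfloor\Delta^2/4\rfloor$ is exactly the target threshold. This is the configuration in which the cop's localisation is forced to be most wasteful: to peel one branch off $B$ she must, exactly as in the upper bound, simultaneously probe $v$, the branch vertices, and enough onward neighbours of each to certify which branch carries the geodesic.

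The core step is a single-turn accounting argument. Classifying the cop's probes by which branch-subtree of $v$ they fall into, I would show that a probe lying outside all the subtrees rooted at $B$ returns the same distance to every candidate position and so gives no information, whereas a probe inside the subtree of a particular $w\in B$ can, within one turn, certify at most the single onward direction that contains it. Hence to rule the robber out of a branch the cop must invest one probe per onward direction of that branch, and to collapse $B$ to a single surviving branch she needs on the order of $|B|(\Delta-|B|)$ probes; with strictly fewer than $\lfloor\Delta^2/4\rfloor$ probes, two candidate positions lying on two different branches necessarily receive identical response vectors. The robber then advances one step along both surviving branches as phantom trajectories, and I re-establish the configuration centred at the next vertex.

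I expect the maintenance of the invariant to be the main obstacle. After the cop's probe the surviving ambiguity may have width only $2$ in the first coordinate, so to regenerate a full width-$\lceil\Delta/2\rceil$ set $B$ the robber must exploit the homogeneity of the tree, refilling the branch set from the $\Delta-1$ onward directions of the vertex he advances to and, where necessary, retreating one step towards $v$ to reset the configuration before the cop can capitalise on any coordinate information she has gained. Making this precise requires a carefully stated invariant that simultaneously controls the distance $d$, the branch set $B$, and the onward degree available on each branch, together with an argument that the response class the adversary commits to always admits such a regeneration. The second delicate point is purely arithmetical: to obtain the exact constant $\lfloor\Delta^2/4\rfloor$ rather than merely $\Theta(\Delta^2)$ one must use the balanced split $|B|=\lceil\Delta/2\rceil$ and verify the per-probe bound with no slack, via the identity $\lceil\Delta/2\rceil\lfloor\Delta/2\rfloor=\lfloor\Delta^2/4\rfloor$.
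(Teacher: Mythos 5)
You have the right graph (the infinite $\Delta$-regular tree) and the right quadratic product structure, but the two points you yourself flag as delicate are exactly where the argument is incomplete, and the paper resolves both by choosing a different, self-reproducing invariant. The paper's invariant is: after the cop's $t$th turn there is a vertex $v_t$ such that at least $\ceilf{\Delta-1}{2}$ components of $T_\Delta-v_t$ have \emph{never been probed}, the robber is at distance $r$ from $v_t$, and every vertex at distance $r$ from $v_t$ in those components is a consistent location (any probe ever made reaches all of them through $v_t$, so returns the same distance to each). This invariant regenerates for free: if $w_1$ is a neighbour of $v_t$ in an unprobed component and $T_\Delta-w_1$ still has $\ceilf{\Delta-1}{2}$ unprobed components after the cop's next turn, the robber simply steps away from $v_t$ and one sets $v_{t+1}=w_1$. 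There is no ``refilling'' of the branch set and certainly no retreating towards $v$ --- your proposed retreat is a red flag, since it would decrease the robber's distance to previously probed vertices and leak information; the fact that you need it suggests your invariant (``nothing separates the branches of $B$'') is not the one that closes the induction.

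The accounting step also needs repair. Each branch vertex $w\in B$ has $\Delta-1$ onward neighbours, not $\Delta-\abs{B}$, so the claim ``each branch offers $\Delta-\abs{B}$ onward directions'' and the ensuing ``one probe per onward direction'' justification do not cohere: taken literally, ruling the robber out of a branch would cost $\Delta-1$ probes, which is not what the cop needs to do. The correct statement is that to \emph{prevent the invariant from being re-established at} $w_i$ the cop must leave fewer than $\ceilf{\Delta-1}{2}$ of the $\Delta-1$ onward components of $w_i$ unprobed, hence must place probes in at least $\floorf{\Delta+1}{2}$ of them; these probe sets are pairwise disjoint over the $\ceilf{\Delta-1}{2}$ candidate branches, forcing at least $\ceilf{\Delta-1}{2}\floorf{\Delta+1}{2}=\floorf{\Delta^2}{4}$ probes in a single turn. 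With fewer probes some $w_1$ survives and the induction continues, so the cop never probes within $r$ of the robber. Your arithmetic $\ceil{\Delta/2}\floor{\Delta/2}=\floor{\Delta^2/4}$ happens to give the same number, but without the ``leave fewer than $\abs{B}$ unprobed sub-branches'' formulation the per-branch cost of $\Delta-\abs{B}$ probes is unjustified, and the whole lower bound rests on it.
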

\begin{proof}We show that on the infinite $\Delta$-regular tree, $T_{\Delta}$, if the cop probes fewer than $\floorf{\Delta^2}{4}$ vertices on each turn, it is possible that she never probes a vertex within $r$ of the robber's location, where $r$ is an arbitrarily large distance. In fact we claim that for any fixed $r$, it is possible that for every $t$, after the cop's $t$th turn there is some vertex $v_t$ such that $T_{\Delta}-v_t$ has at least $\ceilf{\Delta-1}{2}$ components which have never been probed, the robber's distance from $v_t$ is $r$, and any vertex at distance $r$ from $v_t$ in the unprobed components is possible. This is certainly possible after the first step, since there is some finite subtree containing all the vertices probed, so setting $v_1$ be a leaf of that subtree, there are $\Delta-1$ components of $T_{\Delta}$ which have not been probed, and all vertices in these components which have distance $r$ from $v_1$ are possible locations for the robber which are not distinguished by the results of the cop's first turn. Suppose that the cop is in the required situation after her $t$th turn, and write $\seq wk$ for the neighbours of $v_t$ in unprobed components of $T_{\Delta}-v_t$. If, at the cop's $(t+1)$th turn, she probes vertices in at least $\floorf{\Delta+1}{2}$ of the components of $T_{\Delta}-w_i$ which do not include $v_t$ for each $i$, then she makes at least
\[
\ceilf{\Delta-1}{2}\floorf{\Delta+1}{2}=\floorf{\Delta^2}{4}
\]
probes, since all these sets of vertices are disjoint. So this is not the case, and without loss of generality $T_{\Delta}-w_1$ has at least $\ceilf{\Delta-1}{2}$ unprobed components. Provided the robber was in one of these, and has moved further away from $v_t$, the same situation holds with $v_{t+1}=w_1$.\end{proof}
\subsection{Exact result for maximum degree $3$}
In this case we prove that all connected graphs with maximum degree $3$ are $3$-locatable. This is trivially best possible, since $\rlp(K_4)=3$. Again, our bound applies even if the graph is infinite.  

\begin{thm}For any connected graph $G$ with $\Delta(G)=3$, $\rlp(G)\leq 3$.\end{thm}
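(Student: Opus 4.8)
The plan is to refine the pursuit strategy behind the quadratic bound $\rlp(G)\le\floorf{(\Delta+1)^2}{4}+1$ so that, when $\Delta=3$, only three probes per turn are needed. As there, the cop maintains the invariant that after her turn she knows the robber's distance $d$ to a tracked vertex $v$ together with the set $S$ of neighbours of $v$ through which a geodesic to the robber can start, and she drives down the lexicographically ordered potential $(d,|S|)$. The structural saving is that once the pursuit is under way the direction she ``came from'' is excluded, so $v$ has at most $\Delta-1=2$ forward candidates, and each forward neighbour of $v$ has in turn at most two forward neighbours. This is precisely what lets a budget of three probes reach \emph{two} levels deep, which is what the argument really needs: a robber who flees must then run \emph{towards} a vertex she has already probed, forcing $d$ to drop, rather than being able to maintain his distance while staying in an unprobed branch. (An opening probe establishes the invariant as in the quadratic bound, with $|S|\le3$ transiently until the first advance prunes the backward direction.)

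Concretely I would split each turn into two cases. When $|S|=1$, with unique forward vertex $w$, the cop probes $w$ together with both forward neighbours of $w$; since the robber lies at distance $d-1$ from $w$, hence at distance at most $d-2$ from one of these grandchildren, after his move he is still within $d-1<d$ of a probed vertex, and tracking that vertex gives $d_{t+1}<d_t$. When $|S|=2$, with candidates $w_1,w_2$, the cop probes one candidate together with one forward neighbour in each of the two candidate branches; on a tree the responses then either show the robber has come closer, so $d$ decreases, or let her advance to the correct $w_i$ while excluding one of its two forward neighbours, so that $|S|$ drops to $1$ with $d$ unchanged. In every case the potential strictly decreases, and since $(d,|S|)$ is well-ordered with $|S|$ bounded, the cop wins in finitely many steps from any response to her opening probe, so (via \lemma{countable}) $G$ is $3$-locatable.

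The main obstacle is the case $|S|=2$: this is exactly where the general strategy is forced to spend five probes (it probes $v$, both $w_i$, and a grandchild in each branch), and compressing it to three is the crux. On a tree the compression succeeds because probing in two different branches reveals which branch contains the robber, and because each move changes the distance to a neighbour by exactly $\pm1$, so the finitely many response patterns can be checked one by one to yield progress. The more serious difficulty is discharging the \emph{tree} assumption, since the theorem is asserted for arbitrary connected and possibly infinite graphs: on a general graph the distance to a neighbour can stay equal under a move, and the branches hanging off $v$ need not be disjoint, so the clean branch resolution breaks down. I would handle this by recasting the bookkeeping in the graph-agnostic language of the quadratic-bound proof — tracking which probed vertices can still lie on a geodesic from the putative new centre — and, at the points where three simultaneous probes genuinely cannot both determine $d(v,\cdot)$ and prune $S$, carrying the grandchild-distance data obtained on one turn forward to the next so that the pruning is realised over a bounded block of turns rather than in a single turn. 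Verifying that this amortised scheme still forces $(d,|S|)$ downward, together with the separate treatment of short cycles (where moving ``forward'' can bring the robber back near $v$), is where I expect the real work to lie.
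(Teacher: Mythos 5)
There is a genuine gap --- in fact two, and you have flagged only one of them. The first is the one you acknowledge: your $|S|=2$ step is verified only for trees, and your plan for general graphs (``recasting the bookkeeping\dots carrying the grandchild-distance data forward'') is a declaration of intent rather than an argument. For what it is worth, the paper's descent phase does essentially what you are groping towards, but without tracking $|S|$ at all: having found a neighbour $x_1$ of the previous centre that still returns $r$, the cop knows the geodesic from $x_1$ avoids $x_0$, so the robber is at distance $r-2$ from one of the at most four ``grandchildren'' $(\Gamma(x_2)\cup\Gamma(x_3))\setminus\{x_1,x_2,x_3\}$; she probes three of these, and if none returns less than $r$ she deduces the unprobed fourth would have done, and probes \emph{its} at most three neighbours on the following turn. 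This forces a strictly smaller response within three turns and is graph-agnostic, so you could simply replace your $|S|$ bookkeeping by ``the best distance seen so far strictly decreases every bounded number of turns''.

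The second gap is the one you do not mention, and it is where the paper spends most of its effort: the endgame. Your potential argument can at best bring the cop to a state where she has probed a vertex $v$ adjacent to the robber, i.e.\ $d=1$ with up to three candidate neighbours; it cannot by itself reach $d=0$, because the robber moves between turns. Locating him from $d=1$ is not a routine continuation of the descent. The paper needs to exclude $K_{3,3}$ at the outset and handle it by an ad hoc two-turn strategy, then uses an observation about pairs of candidates sharing two common neighbours, and finally a five-way case analysis of the second neighbourhood of the candidate pair (distinguishing whether the four ``grandchildren'' $x_1,x_2,y_1,y_2$ are adjacent to one another, far apart, share two common neighbours, or pairwise share unique common neighbours). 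The graph $K_{3,3}$ is a concrete witness that no pure distance-decreasing scheme can finish: it is $3$-regular with diameter $2$, so the descent bottoms out immediately, yet a robber oscillating between the two classes survives any strategy that does not exploit the specific local structure. Your proposal contains no finishing argument at all, so even granting the tree-to-general-graph repair, the proof is not complete.
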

\begin{proof}
If $G\cong K_{3,3}$, say with vertices $a,b,c$ in one class and $u,v,w$ in the other, the cop can win by probing $a,b,u$ on the first turn and, if necessary, $a,b,v$ on the second. Henceforth we assume $G\not\cong K_{3,3}$. We will also use the following observation several times.

\begin{obs}\label{square}Suppose the cop knows that at the time of her last probe the robber was at one of two vertices $p,q$ which have at least two neighbours $r,s$ in common. Write $p'$ for the third neighbour of $p$, and $q'$ for the third neighbour of $q$. If $p'=q$, or $p'$ is not adjacent to both $r$ and $s$, the cop can win by probing $q,r,s$. Otherwise $q'$ is not adjacent to both $r$ and $s$, so she can win by probing $p,r,s$.\end{obs}

We give a winning strategy for the cop. On her first turn she probes any three vertices, and since the graph is connected the robber's distance to each is finite. Suppose she has just probed a vertex and gotten a result of $r$. In the case $r\geq 2$, we show how she may get a result of less than $r$ from some probe within the next three time steps, and so by repeatedly employing this tactic she may eventually force a result of $1$. We then show how she may win from that position.

Suppose the cop probed $x_0$ which returned distance $r$. At the next turn she probes the neighbours of $x_0$; one of these probes must be at distance at most $r$ from the robber's new position. If it is less than $r$, we are done; otherwise from probing $x_1$, say, we get a result of $r$, and we know that the shortest path from $x_1$ to the robber's location must not go through $x_0$. Write $x_2,x_3$ for the other neighbours of $x_1$. Let $S=(\Gamma(x_2)\cup\Gamma(x_3))\setminus\{x_1,x_2,x_3\}$. At her next turn the cop probes as many vertices in $S$ as possible. If none of the responses is less than $r$ then we must have $\abs{S}=4$, and the one unprobed vertex of $S$ would have returned less than $r$. Now she probes all the neighbours of that vertex; one of them must be at distance less than $r$.

Continuing in this manner, the cop will either win or reach a position where she has just probed some vertex $v$ adjacent to the robber's position. We now show how the cop can win from this point.

Write $a,b,c$ for the neighbours of $v$. The cop next probes $a,b,c$. It is not possible for there to be three locations for the robber consistent with the results of these probes, since $G\not\cong K_{3,3}$, so either the cop has won or there are exactly two possible locations for the robber. Consequently, if at least two of the probes returned $1$, she can win using \observ{square}. 

The only remaining possibility is that exactly one of the probes, say at $a$, returned $1$, and the robber could have been at either of the neighbours of $a$ which are not $v$, say $u$ and $w$. If $u$ and $w$ have another common neighbour, the cop can win using \observ{square}. If not, we may write $\Gamma(u)=\{a,x_1,x_2\}$ and $\Gamma(w)=\{a,y_1,y_2\}$, where $x_1,x_2,y_1,y_2$ are all distinct.

On her next turn, the cop will probe at $a$ and two of $x_1,x_2,y_1,y_2$; we will describe how she chooses which two to probe below. Note that the probe at $a$ will distinguish whether the robber is at $a$, in $\{u,v\}$ or in $\{x_1,x_2,y_1,y_2\}$, and the other probes will distinguish between $u$ and $v$, so the cop will win unless she is unable to distinguish between the two unprobed vertices in $\{x_1,x_2,y_1,y_2\}$ (and the robber is at one of them). We consider five cases depending on the local structure.

\begin{enumerate}[(i)]
\item One of $x_1,x_2,y_1,y_2$ (say $x_1$) is adjacent to exactly one of the others. In this case the cop probes $a$, $x_1$ and one of $x_2,y_1,y_2$ which is not adjacent to $x_1$.
\item One of $x_1,x_2,y_1,y_2$ (say $x_1$) is adjacent to two of the others. In this case the cop probes $a$, $x_1$ and one of $x_2,y_1,y_2$ which is adjacent to $x_1$.
\item Some pair of $x_1,x_2,y_1,y_2$ (say $x_1,y_1$) are at distance more than $2$. In this case the cop probes $a$, $x_1$ and $y_2$.
\item Some pair of $x_1,x_2,y_1,y_2$ have two common neighbours. In this case the cop probes $a$ and the other two vertices.
\item Every pair of $x_1,x_2,y_1,y_2$ have a unique common neighbour. In this case the cop probes $a$, $x_1$ and $x_2$.
\end{enumerate}

Cases (i)--(v) exhaust all possibilities. In cases (i), (ii) and (iii), the probe at $x_1$ will distinguish between the two unprobed vertices in $\{x_1,x_2,y_1,y_2\}$, and so the cop wins immediately. In case (iv), the cop wins unless the robber was at one of the two unprobed vertices; these vertices have two common neighbours so she can now win using \observ{square}. In case (v), the six common neighbours of the pairs must all be different; write $z_{i,j}$ for the common neighbour of $x_i$ and $y_j$. The cop wins immediately unless the robber was at $y_1$ or $y_2$. In this case his possible locations at her next probe will be $\{w,y_1,y_2,z_{1,1},z_{1,2},z_{2,1},z_{2,2}\}$. The cop can therefore win by probing $x_1$, $y_1$ and $y_2$.\end{proof}

\section{Acknowledgements}
The first author acknowledges support from the European Union through funding under FP7--ICT--2011--8 project HIERATIC (316705) and from the European Research Council (ERC) under the European Union's Horizon 2020 research and innovation programme (grant agreement no.\ 639046), and is grateful to Douglas B. West for drawing his attention to this problem. The second author acknowledges support through funding from NSF grant DMS~1301614 and MULTIPLEX grant no.\ 317532, and is grateful to the organisers of the 8th Graduate Student Combinatorics Conference at the University of Illinois at Urbana-Champaign for drawing his attention to the problem. The third author acknowledges support through funding from the European Union under grant EP/J500380/1 as well as from the Studienstiftung des Deutschen Volkes.

\end{document}